\newtheorem{thm}{Theorem}[section]
\newtheorem{Lemma}[thm]{Lemma}
\newtheorem{Proposition}[thm]{Proposition}
\newtheorem{Corollary}[thm]{Corollary}
\newtheorem*{thm*}{Theorem}
\theoremstyle{definition}
\newtheorem{Definition}[thm]{Definition}
\newtheorem{Remark}[thm]{Remark}
\definecolor{wwwwww}{rgb}{0.4,0.4,0.4}
\DeclareMathOperator{\Cox}{Cox}
\DeclareMathOperator{\Pic}{Pic}
\DeclareMathOperator{\Eff}{Eff}
\DeclareMathOperator{\ch}{char}
\DeclareMathOperator{\Sing}{Sing}
\DeclareMathOperator{\Sym}{Sym}
\DeclareMathOperator{\rank}{rank}
\DeclareMathOperator{\Nef}{Nef}
\DeclareMathOperator{\Mov}{Mov}
\begin{document}

\title{On the unirationality of quadric bundles}

\author[Alex Massarenti]{Alex Massarenti}
\address{\sc Alex Massarenti\\ Dipartimento di Matematica e Informatica, Universit\`a di Ferrara, Via Machiavelli 30, 44121 Ferrara, Italy}
\email{msslxa@unife.it}

\date{\today}
\subjclass[2020]{Primary 14E08, 14M20; Secondary 14M22, 14J26.}
\keywords{Quadric bundles, unirationality}

\begin{abstract}
We prove that a general $n$-fold quadric bundle $\mathcal{Q}^{n-1}\rightarrow\mathbb{P}^{1}$, over a number field, with $(-K_{\mathcal{Q}^{n-1}})^n > 0$ and discriminant of odd degree $\delta_{\mathcal{Q}^{n-1}}$ is unirational, and that the same holds for quadric bundles over an arbitrary infinite field provided that $\mathcal{Q}^{n-1}$ has a point, is otherwise general and $n\leq 5$. As a consequence we get the unirationality of a general $n$-fold quadric bundle $\mathcal{Q}^{h}\rightarrow\mathbb{P}^{n-h}$ with discriminant of odd degree $\delta_{\mathcal{Q}^{h}}\leq 3h+4$, and of any smooth $4$-fold quadric bundle $\mathcal{Q}^{2}\rightarrow\mathbb{P}^{2}$, over an algebraically closed field, with $\delta_{\mathcal{Q}^{2}}\leq 12$. 
\end{abstract}

\maketitle
\setcounter{tocdepth}{1}
\tableofcontents

\section{Introduction}
An $n$-dimensional variety $X$ over a field $k$ is rational if it is
birational to $\mathbb{P}^n_{k}$, while $X$ is unirational if there is
a dominant rational map $\mathbb{P}^n_{k}\dasharrow X$. The L\"uroth
problem, asking whether every unirational variety was rational, dates
back to the second half of the nineteenth century \cite{Lu75}. These
two notions turned out to be equivalent for curves and complex surfaces. Only
in the $1970$s examples of unirational but non rational $3$-folds, over an
algebraically closed field of characteristic zero, were given by
M. Artin and D. Mumford \cite{AM72}, V. Iskovskih and I. Manin
\cite{IM71}, and  C. Clemens and P. Griffiths \cite{CG72}. 

The problem of determining whether a variety is rational or unirational is
in general very hard and unirationality is very poorly understood. For instance, the rationality of certain smooth cubic $4$-folds has been proved only recently \cite{BRS19}, \cite{RS19}, and even today not many examples of unirational non
rational varieties have been worked out. Also due to this difficulty,
unirationality has gradually been replaced by the notion of rational connection
\cite{Ca92}, \cite{KMM92}. A variety $X$ is rationally connected if
two general points of $X$ can be joined by a rational curve. We refer to \cite{Ar05} for a comprehensive survey on the subject.

Moreover, the set of rational points of a unirational variety, over an infinite field, is dense. This fact makes  unirationality over a number field an interesting property not only in birational geometry but also in number theory.

When dealing with unirationality questions ad hoc constructions are often needed. In this paper we introduce instead a unifying strategy that can be applied to quadric bundles and more generally to fibrations $\pi:X\rightarrow Y$ over a field $k$ when $Y$ is unirational and the generic fiber of $\pi$ is unirational over $k(Y)$. 

The heart of our approach lies in the Enriques's unirationality criterion which states that $X$ is unirational if and only if $\pi$ has a unirational multisection. In order to produce such multisection we proceed as follows:
\begin{itemize}
\item[-] either we construct unirational subvarieties of $X$ mapping dominantly onto $Y$ as intersections of special divisors on $X$; or
\item[-] we consider birational transformations $X\dasharrow X'$ where $X'\rightarrow Y'$ is a fibration whose general fiber $F'$ is unirational and such that the strict transform of $F'$ in $X$ maps dominantly onto $Y$. 
\end{itemize}
All the arguments presented in the paper rely on this general approach. More specifically, we apply the above strategy to quadric bundles $X$ over $\mathbb{P}^1$ seen as hypersurfaces embedded in splitting projective bundles. In this case, taking advantage of the toric quotient construction of such splitting projective bundle, we can write down the equation of $X$ in Cox coordinates. In such a way we are able to describe explicitly special unirational subvarieties of $X$ and also the birational transformations that $X$ inherits from the ambient projective bundle.  

Once this is done we investigate quadric bundles over higher dimensional projective spaces by studying their restrictions to a general line and applying to these our results for quadric bundles over $\mathbb{P}^1$.

Rationality of conic bundles has been extensively studied and we have a precise conjectural rationality statement, we refer to \cite{Pr18} for a comprehensive survey. 
Moreover, quadric bundles have recently received great attention especially concerning stable rationality \cite{HKT16}, \cite{To16}, \cite{AO18}, \cite{Sc18}, \cite{BG18}, \cite{HPT18}, \cite{HT19}, \cite{Sc19a}, \cite{Sc19b}, \cite{Pa21}, \cite{BKK21}, \cite{NO22}. We recall that a variety $X$ is stably rational if $X\times \mathbb{P}^m_{k}$ is rational for some $m\geq 0$. Hence, a rational variety is stably rational, and a stably rational variety is unirational. As an application of one of our main results together with \cite{HPT18}, \cite{HT19} we will derive examples of unirational but non stably rational quadric bundles.

On the contrary unirationality is still widely open and not much is known. Classically, conjectures on unirationality of conic bundles take into account the degree of the
discriminant and, mimicking what is known abut rationality, conic
bundles with discriminant of large degree are not expected to be unirational \cite[Section 14.2]{Pr18}. 

We will denote an $n$-fold quadric bundle over $\mathbb{P}^{n-h}$ by $\pi:\mathcal{Q}^{h}\rightarrow\mathbb{P}^{n-h}$. Its discriminant $D_{\mathcal{Q}^{h}}\subset\mathbb{P}^{n-h}$ is the divisor parametrizing the singular quadrics in the fibration $\pi:\mathcal{Q}^{h}\rightarrow\mathbb{P}^{n-h}$. We will denote by $\delta_{\mathcal{Q}^{h}}$ the degree of $D_{\mathcal{Q}^{h}}$. 
   
By \cite[Corollary 8]{KM17} smooth conic bundles such that $(-K_{\mathcal{Q}^1})^2 > 0$ are unirational as soon as they have a point. The positivity condition  $(-K_{\mathcal{Q}^1})^2 > 0$ translates into $\delta_{\mathcal{Q}^1}\leq 7$. We will investigate this problem for higher dimensional quadric bundles.

As a consequence of a result due to C. Tsen \cite{Ts33} and S. Lang \cite[Corollary on page 378]{Lan52} all quadric bundles $\mathcal{Q}^{n-1}\rightarrow\mathbb{P}^{1}$ over an algebraically closed field are rational. Furthermore, J. Koll\'{a}r proved that any smooth quadric bundle $\mathcal{Q}^{n-1}\rightarrow\mathbb{P}^{1}$ over a local field is unirational if and only if it has a point \cite[Corollary 1.8]{Kol99}. We will prove results of this type over more general fields. As a sample over number fields we have the following:

\begin{thm}\label{th_A}
Let $\pi:\mathcal{Q}^{n-1}\rightarrow\mathbb{P}^{1}$ be a general quadric bundle over a number field. If $(-K_{\mathcal{Q}^{n-1}})^n > 0$ and $\delta_{\mathcal{Q}^{n-1}}$ is odd then $\mathcal{Q}^{n-1}$ is unirational.
\end{thm}
Furthermore, in Corollary \ref{cr_A} we will extend Theorem \ref{th_A} to an arbitrary infinite field provided that either $\delta_{\mathcal{Q}^{n-1}}\leq 3n+1$ or $n\leq 5$ and $\mathcal{Q}^{n-1}$ has a point.

Theorem \ref{th_A} and Corollary \ref{cr_A}, whose proofs rely on the strategy outlined in the first part of the introduction, are the core of the paper, and starting from them we will derive several unirationality results for quadric bundles over higher dimensional projective spaces. For instance, by Corollary \ref{cor1b2} for surface quadric bundles over the projective plane we have the following: 

\begin{thm}\label{th_B}
Let $\pi:\mathcal{Q}^{2}\rightarrow\mathbb{P}^{2}$ be a smooth complex $4$-fold quadric bundle. If $\delta_{\mathcal{Q}^2}\leq 12$ then $\mathcal{Q}^{2}$ is unirational.    
\end{thm}  

Moreover, in Corollary \ref{corEn} and Remark \ref{algclo} we extend Theorem \ref{th_B} to quadric bundles $\pi:\mathcal{Q}^{h}\rightarrow\mathbb{P}^{n-h}$ over more general fields and with $h,n-h\geq 2$. 

Theorem \ref{th_B}, together with \cite[Corollary 1.3]{BKK21}, yields that a very general quadric bundle $\pi:\mathcal{Q}^{2}\rightarrow\mathbb{P}^{2}$ with discriminant of degree $10 \leq \delta_{\mathcal{Q}^2}\leq 12$ is unirational but not stably rational. 

For a special class of quadric bundles, namely divisors of bidegree $(d,2)$ in $\mathbb{P}^{n-h}\times \mathbb{P}^{h+1}$, in Propositions \ref{333P1}, \ref{gen333}, Corollary \ref{smooth} and Remark \ref{12-22} we give more refined results. In particular, when $k$ is the field of complex numbers as a consequence of our results and \cite{HPT18}, \cite{HT19} we have that a very general divisor of bidegree $(2,2)$ in $\mathbb{P}^2\times\mathbb{P}^{h+1}$ for $h = 1,2$ is unirational but not stably rational.

The situation is very different when $\delta_{\mathcal{Q}^{n-1}}$ is even. For instance, the quadric bundles, over a real closed field, of a fixed multidegree and without points form a semialgebraic set containing an open ball in the Euclidean topology. However, in Corollary \ref{potD} we prove a unirationality result over a quadratic extension of the base field which in particular implies the potential density of the rational points. Note that Theorem \ref{th_A} and Corollary \ref{potD} imply \cite[Conjecture 1.3]{HT00} for quadric bundles over $\mathbb{P}^1$. 

Finally, in Section \ref{sec_ff} we will apply the techniques introduced in the previous sections to give unirationality results for quadric bundles over finite fields. 
 
\subsection*{Conventions on the base field and terminology} 
All along the paper the base field $k$ will be of characteristic different form two. In Sections \ref{QBnm}, \ref{QBpv} $k$ will be an infinite field while in Section \ref{sec_ff} $k$ will be a finite field. 

Let $X$ be a variety over $k$. When we say that $X$ is rational or unirational, without specifying over which field, we will always mean that $X$ is rational or unirational over $k$. Similarly, we will say that $X$ has a point or contains a variety with certain properties meaning that $X$ has a $k$-rational point or contains a variety defined over $k$ with the required properties. 

\section{Quadric bundles and the Enriques's unirationality criterion}\label{QB}
In this section we introduce the notation and a version of the Enriques's unirationality criterion for quadric bundles.

\begin{Definition}\label{def1}
Let $W$ be a smooth $(n-h)$-dimensional variety, $\mathcal{E}$ a rank $h+2$ vector bundle over $W$, $\overline{\pi}:\mathbb{P}(\mathcal{E})\rightarrow W$ the associated projective bundle with tautological bundle $\mathcal{O}_{\mathbb{P}(\mathcal{E})}(1)$, and $\mathcal{L}$ a line bundle over $W$. A quadratic form with values in $\mathcal{L}$ is a global section
$$
\sigma \in H^0(W,\Sym^2\mathcal{E}^{\vee}\otimes \mathcal{L}) \cong H^0(\mathbb{P}(\mathcal{E}),\mathcal{O}_{\mathbb{P}(\mathcal{E})}(2)\otimes \overline{\pi}^{*}\mathcal{L}).
$$
An $n$-dimensional quadric bundle over $W$ is a variety of the form $\mathcal{Q}^{h} := \{\sigma = 0\}\subset\mathbb{P}(\mathcal{E})$ where $\sigma\in H^0(\mathbb{P}(\mathcal{E}),\mathcal{O}_{\mathbb{P}(\mathcal{E})}(2)\otimes \overline{\pi}^{*}\mathcal{L})$ is a generically non degenerate quadratic form, endowed with the projection $\pi:\mathcal{Q}^{h}\rightarrow W$ induced by $\overline{\pi}$. A conic bundle is a quadric bundle with $h = 1$.

The discriminant of $\pi:\mathcal{Q}^{h}\rightarrow W$ is the divisor $D_{\mathcal{Q}^{h}}\subset W$ where $\sigma$ does not have full rank. When $W = \mathbb{P}^{n-h}$ the discriminant $D_{\mathcal{Q}^{h}}\subset \mathbb{P}^{n-h}$ is a hypersurface and we will denote by $\delta_{\mathcal{Q}^{h}} := \deg(D_{\mathcal{Q}^{h}})$ its degree. 
\end{Definition}

\begin{Remark}
Hence, a general fiber of $\pi:\mathcal{Q}^{h}\rightarrow W$ is a quadric hypersurface in $\mathbb{P}^{h+1}$. Note that in Definition \ref{def1} the map $\pi:\mathcal{Q}^{h}\rightarrow W$ is not required to be flat. Often quadric bundles are defined as morphisms $\pi:\mathcal{Q}^{h}\rightarrow W$ whose fibers are isomorphic to quadric hypersurfaces of constant dimension and with $\mathcal{Q}^{h}$ smooth. Then $\pi$ is necessarily flat and there exists a rank $h+2$ vector bundle $\mathcal{E}\rightarrow W$, a line bundle $\mathcal{L}\rightarrow W$, and a section $\sigma\in H^0(\mathbb{P}(\mathcal{E}),\mathcal{O}_{\mathbb{P}(\mathcal{E})}(2)\otimes \overline{\pi}^{*}\mathcal{L})$ as above such that $\mathcal{Q}^{h}$ identifies with the zero locus of $\sigma$ in $\mathbb{P}(\mathcal{E})$ and $\pi = \overline{\pi}_{|\mathcal{Q}^{h}}$ \cite[Theorem 3.2]{Pr18}, \cite[Proposition 1.2]{Bea77}.
\end{Remark}

Let $a_0,\dots,a_{h+1} \in\mathbb{Z}_{\geq 0}$, with $a_0\geq a_1\geq\dots\geq a_{h+1}$, be non negative integers, and consider the simplicial toric variety $\mathcal{T}_{a_0,\dots,a_{h+1}}$ with Cox ring
$$\Cox(\mathcal{T}_{a_0,\dots,a_{h+1}})\cong k[x_0,\dots,x_{n-h},y_0,\dots,y_{h+1}]$$
$\mathbb{Z}^2$-grading given, with respect to a fixed basis $(H_1,H_2)$ of $\Pic(\mathcal{T}_{a_0,\dots,a_{h+1}})$, by the following matrix
$$
\left(\begin{array}{cccccc}
x_0 & \dots & x_{n-h} & y_0 & \dots & y_{h+1}\\
\hline
1 & \dots & 1 & -a_0 & \dots & -a_{h+1}\\ 
0 & \dots & 0 & 1 & \dots & 1
\end{array}\right)
$$
and irrelevant ideal $(x_0,\dots,x_{n-h})\cap (y_0,\dots,y_{h+1})$.
Then
$$\mathcal{T}_{a_0,\dots,a_{h+1}}\cong \mathbb{P}(\mathcal{E}_{a_0,\dots,a_{h+1}})$$
with
$\mathcal{E}_{a_0,\dots,a_{h+1}}\cong\mathcal{O}_{\mathbb{P}^{n-h}}(a_0)\oplus\dots\oplus
\mathcal{O}_{\mathbb{P}^{n-h}}(a_{h+1})$. The secondary fan of $\mathcal{T}_{a_0,\dots,a_{h+1}}$ is as follows
$$
\begin{tikzpicture}[line cap=round,line join=round,>=triangle 45,x=1.0cm,y=1.0cm]
\clip(-7.,-0.2) rectangle (2.,1.3);
\draw [->,line width=0.4pt] (0.,0.) -- (-1.,1.);
\draw [->,line width=0.4pt] (0.,0.) -- (-2.,1.);
\draw [->,line width=0.4pt] (0.,0.) -- (-6.,1.);
\draw [->,line width=0.4pt] (0.,0.) -- (0.,1.);
\draw [->,line width=0.4pt] (0.,0.) -- (1.,0.);
\draw [line width=0.4pt,dotted] (-2.2,1.)-- (-5.6,1.);
\begin{scriptsize}
\draw [fill=black] (-1.,1.) circle (0.5pt);
\draw[color=black] (-1,1.15) node {$v_{h+1}$};
\draw [fill=black] (-2.,1.) circle (0.5pt);
\draw[color=black] (-2,1.15) node {$v_{h}$};
\draw [fill=black] (-6.,1.) circle (0.5pt);
\draw[color=black] (-6,1.15) node {$v_0$};
\draw [fill=black] (0.,1.) circle (0.5pt);
\draw[color=black] (0.25,1.15) node {$H_2$};
\draw [fill=black] (1.,0.) circle (0.5pt);
\draw[color=black] (1.25,0.02) node {$H_1$};
\end{scriptsize}
\end{tikzpicture}
$$
where $H_1 = (1,0)$ corresponds to the sections $x_0,\dots,x_{n-h}$, $H_2 = (0,1)$, and $v_i = (-a_i,1)$ corresponds to the section $y_i$ for $i = 0,\dots,h+1$.

\begin{Definition}\label{splitQB}
A splitting $n$-fold quadric bundle $\pi:\mathcal{Q}^h\rightarrow\mathbb{P}^{n-h}$ is given by an equation of the following form
\stepcounter{thm}
\begin{equation}\label{Cox_g}
\mathcal{Q}^h := \left\lbrace\sum_{0\leq i\leq j\leq h+1}\sigma_{i,j}(x_0,\dots,x_{n-h})y_iy_j = 0\right\rbrace\subset \mathbb{P}(\mathcal{O}_{\mathbb{P}^{n-h}}(a_0)\oplus\dots\oplus
\mathcal{O}_{\mathbb{P}^{n-h}}(a_{h+1}))
\end{equation}
where $\sigma_{i,j}\in k[x_0,\dots,x_{n-h}]_{d_{i,j}}$ is a homogeneous polynomial of degree $d_{i,j}$, and 
\stepcounter{thm}
\begin{equation}\label{compdeg}
d_{0,0}-2a_0 = d_{0,1}-a_0-a_1 =\dots = d_{i,j}-a_i-a_j = \dots = d_{h+1,h+1}-2a_{h+1}.
\end{equation}
The multidegree of a splitting quadric bundle is $(d_{0,0},d_{0,1},\dots, d_{h+1,h+1})\in\mathbb{Z}^{\binom{h+3}{2}}$.
\end{Definition}
Note that the discriminant of a splitting quadric bundle of multidegree $(d_{0,0},d_{0,1},\dots, d_{h+1,h+1})$ has degree $\delta_{\mathcal{Q}^h} = d_{0,0}+\dots + d_{i,i} + \dots + d_{h+1,h+1}$. Without loss of generality we may assume that $a_0\geq a_1\geq\dots\geq a_{h+1}$ so that (\ref{compdeg}) yields $d_{0,0} \geq d_{1,1}\geq \dots\geq d_{h+1,h+1}$. Furthermore, the degrees $d_{i,i}$ of all the $\sigma_{i,i}\neq 0 $ must have the same parity. 
\stepcounter{thm}
\subsection{About the notion of generality}\label{gen_d}
Let $k^{N(n-h,d_{i,j})}$, with $N(n-h,d_{i,j}) = \binom{d_{i,j}+n-h}{n-h}$, be the vector space of degree $d_{i,j}$ homogeneous polynomials in $n-h+1$ variables. Then splitting quadric bundles of multidegree $(d_{0,0},d_{0,1},\dots, d_{h+1,h+1})$ over $\mathbb{P}^{n-h}$ correspond to the elements of 
$$V_{d_{0,0},\dots, d_{h+1,h+1}}^{n-h} = k^{N(n-h,d_{0,0})}\oplus k^{N(n-h,d_{0,1})}\oplus\dots\oplus k^{N(n-h,d_{h+1,h+1})}$$ 
up to multiplication by a non zero scalar. We will say that a splitting quadric bundle $\mathcal{Q}^h$ is general if it corresponds to a general element of $V_{d_{0,0},\dots, d_{h+1,h+1}}^{n-h}$. Furthermore, we will say that a quadric bundle $\pi:\mathcal{Q}^{h}\subset\mathbb{P}(\mathcal{E})\rightarrow\mathbb{P}^{n-h}$ is general if $\mathcal{Q}^{h}_{|L}$ is general as a splitting vector bundle, where $L\subset\mathbb{P}^{n-h}$ is a general line. When referring to a general quadric bundle satisfying certain properties we will mean that the quadric bundle is general among those satisfying the required properties. 

We would like to stress that since all the proofs presented in the paper are constructive it is possible given the equation cutting out $\mathcal{Q}^h\subset\mathbb{P}(\mathcal{E})$ to establish whether or not $\mathcal{Q}^h$ is general in the required sense. 

\begin{Remark}\label{Gro}
By the Birkhoff–Grothendieck splitting theorem \cite[Theorem 4.1]{HM82} all quadric bundles $\mathcal{Q}^{n-1}\rightarrow\mathbb{P}^1$ are splitting.  
\end{Remark}

\subsection{From quadric bundles over $\mathbb{P}^{n-h}$ to quadric bundles over $\mathbb{P}^{1}$}\label{Pn--->P1}
Let $\pi:\mathcal{Q}^{h}\rightarrow\mathbb{P}^{n-h}$ be a quadric bundle $\mathcal{Q}^{h}\subset\mathbb{P}(\mathcal{E})$. Take a point $p\in\mathbb{P}^{n-h}$ and set $Q_p = \pi^{-1}(p)$. Let $\pi_p:\mathbb{P}^{n-h}\dasharrow\mathbb{P}^{n-h-1}$ be the projection from $p$, $W$ the blow-up of $\mathbb{P}^{n-h}$ at $p$, $\widetilde{\mathcal{Q}}^{h}$ the blow-up of $\mathcal{Q}^{h}$ along $Q_p$, and $\widetilde{\pi}_p:W\rightarrow\mathbb{P}^{n-h-1}$ the morphism induced by $\pi_p$. By the universal property of the blow-up \cite[Chapter II, Proposition 7.14]{Ha77} the morphism $\pi:\mathcal{Q}^{h}\rightarrow\mathbb{P}^{n-h}$ induces a morphism $\widetilde{\pi}:\widetilde{\mathcal{Q}}^{h}\rightarrow W$. Note that a general line $L_p$ through $p$ intersects $D_{\mathcal{Q}^{h}}$ in $\delta_{\mathcal{Q}^{h}}$ points counted with multiplicity. Set $\mathcal{Q}^{h}_{L_p} = \pi^{-1}(L_p)$. Then, $\mathcal{Q}^{h}_{L_p}\rightarrow L_p\cong\mathbb{P}^1$ is a quadric bundle $\mathcal{Q}^{h}_{L_p}\subset\mathbb{P}(\mathcal{E}_{|L_p})$, and hence by Remark \ref{Gro} it is splitting. We sum up the situation in the following diagram
$$
\begin{tikzcd}
\widetilde{\mathcal{Q}}^h \arrow[r] \arrow[d, "\widetilde{\pi}"'] & \mathcal{Q}^h \arrow[d, "\pi"]              &                    \\
W \arrow[r] \arrow[rr, "\widetilde{\pi}_p"', bend right=30]          & \mathbb{P}^{n-h} \arrow[r, "\pi_p", dashed] & \mathbb{P}^{n-h-1}.
\end{tikzcd}  
$$
The generic fiber $\widetilde{\mathcal{Q}}^h_{\eta}$ of $\widetilde{\pi}_p\circ\widetilde{\pi}:\widetilde{\mathcal{Q}}^h\rightarrow \mathbb{P}^{n-h-1}$ is a quadric bundle $\widetilde{\mathcal{Q}}^h_{\eta}\rightarrow\mathbb{P}^{1}_{F}$ over $F = k(t_1,\dots,t_{n-h-1})$. Note that $\delta_{\widetilde{\mathcal{Q}}^h_{\eta}} = \delta_{\mathcal{Q}^{h}}$.

\begin{Lemma}\label{gfib}
If $\widetilde{\mathcal{Q}}^h_{\eta}$ is $F$-unirational then $\mathcal{Q}^h$ is $k$-unirational.  
\end{Lemma}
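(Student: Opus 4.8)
The plan is to exhibit a dominant rational map from a rational variety onto $\mathcal{Q}^h$ by spreading out a unirational parametrization of the generic fiber. Suppose $\widetilde{\mathcal{Q}}^h_\eta$ is $F$-unirational, i.e.\ there is a dominant rational map $\phi\colon \mathbb{P}^N_F \dashrightarrow \widetilde{\mathcal{Q}}^h_\eta$ defined over $F = k(t_1,\dots,t_{n-h-1})$. The first step is to observe that $\widetilde{\mathcal{Q}}^h_\eta$ is the generic fiber of $\widetilde{\pi}\colon \widetilde{\mathcal{Q}}^h \to W$, so the map $\phi$, being defined by finitely many rational functions with coefficients in $F$, extends to a dominant rational map $\Phi\colon \mathbb{P}^N_W \dashrightarrow \widetilde{\mathcal{Q}}^h$ over an open dense subset of $W$ (clearing denominators and noting that the locus where $\Phi$ fails to be defined or fails to dominate is a proper closed subset of $W$, since it does not contain the generic point). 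Here $\mathbb{P}^N_W$ denotes the trivial projective bundle $W \times \mathbb{P}^N$.

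The second step is to reduce from $W$ to $\mathbb{P}^{n-h}$. Since $W$ is the blow-up of $\mathbb{P}^{n-h}$ at the point $p$, it is birational to $\mathbb{P}^{n-h}$, hence rational; therefore $W \times \mathbb{P}^N$ is a rational variety of dimension $(n-h) + N$. Composing $\Phi$ with the birational map $\mathbb{P}^{n-h+N}_k \dashrightarrow W \times \mathbb{P}^N$ and then with the birational morphism $\widetilde{\mathcal{Q}}^h \to \mathcal{Q}^h$ (the blow-up along $Q_p$, which is birational), we obtain a dominant rational map $\mathbb{P}^{n-h+N}_k \dashrightarrow \mathcal{Q}^h$. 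Since $\dim \mathcal{Q}^h = n$ and the source is a projective space over $k$, this exhibits $\mathcal{Q}^h$ as $k$-unirational, as desired.

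The only point requiring a little care — and the step I expect to be the main (mild) obstacle — is the passage from a parametrization of the generic fiber to a parametrization over a neighbourhood in $W$: one must check that the rational map $\Phi$ genuinely dominates $\widetilde{\mathcal{Q}}^h$ and not merely a proper subvariety. This follows because the image of $\Phi$ is a closed subvariety of $\widetilde{\mathcal{Q}}^h$ whose intersection with the generic fiber is all of $\widetilde{\mathcal{Q}}^h_\eta$ (by dominance of $\phi$), and a closed subvariety of $\widetilde{\mathcal{Q}}^h$ containing the generic fiber of $\widetilde\pi$ must equal $\widetilde{\mathcal{Q}}^h$ since $\widetilde{\mathcal{Q}}^h$ is irreducible and dominates $W$. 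Everything else is the standard fact that birational equivalence preserves (uni)rationality and that $\mathbb{P}^m_k$ over a rational base with rational fibers of relative dimension $N$ is rational.
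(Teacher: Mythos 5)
Your proof is correct and follows essentially the same route as the paper: the paper's one-line argument (unirationality of the generic fiber over $F$ gives $k$-unirationality of $\widetilde{\mathcal{Q}}^h$, which is birational to $\mathcal{Q}^h$) is precisely the spreading-out over a rational base plus birational invariance that you make explicit. The only harmless wrinkle, inherited from the paper's own phrasing, is that with $F=k(t_1,\dots,t_{n-h-1})$ the variety $\widetilde{\mathcal{Q}}^h_{\eta}$ is really the generic fiber of the composition $\widetilde{\pi}_p\circ\widetilde{\pi}:\widetilde{\mathcal{Q}}^h\rightarrow\mathbb{P}^{n-h-1}$ rather than of $\widetilde{\pi}$ itself, so the spreading-out takes place over $\mathbb{P}^{n-h-1}$, which is equally rational and changes nothing in your argument.
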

\begin{proof}
If $\widetilde{\mathcal{Q}}^h_{\eta}$ is $F$-unirational then $\widetilde{\mathcal{Q}}^h$ is $k$-unirational, and since $\widetilde{\mathcal{Q}}^h$ is a blow-up of $\mathcal{Q}^h$ we conclude that $\mathcal{Q}^h$ is $k$-unirational as well.  
\end{proof}

The following fundamental results will be our main tools to investigate the birational geometry of quadric bundles over the projective space. 

\begin{Remark}(Lang's theorem)\label{lang}
Fix a real number $r\in\mathbb{R}_{\geq 0}$. A field $k$ is $C_r$ if and only if  every homogeneous polynomial $f\in k[x_0,\dots,n_n]_d$ of degree $d > 0$ in $n+1$ variables with $n+1 > d^r$ has a non trivial zero in $k^{n+1}$.  

If $k$ is a $C_r$ field, $f_1,\dots, f_s\in k[x_0,\dots,n_n]_d$ are homogeneous polynomials of the same degree and $n+1 > sd^r$ then $f_1,\dots,f_s$ have a non trivial common zero in $k^{n+1}$ \cite[Proposition 1.2.6]{Poo17}. Furthermore, if $k$ is $C_r$ then $k(t)$ is $C_{r+1}$ \cite[Theorem 1.2.7]{Poo17}.  

Let $\pi:\mathcal{Q}^{h}\rightarrow \mathbb{P}^{n-h}$ be a quadric bundle over a $C_r$ field $k$ with generic fiber $\mathcal{Q}^{h}_{\eta}$. If $h > 2^{r+n-h}-2$ we have that $\mathcal{Q}^{h}_{\eta}$ has an $F$-point, where $F = k(t_1,\dots,t_{n-h})$. Projecting from such $F$-point we see that $\mathcal{Q}^{h}_{\eta}$ is $F$-rational and hence, arguing as in the proof of Lemma \ref{gfib}, we get that $\mathcal{Q}^{h}$ is rational.   
\end{Remark}

\begin{Remark}(Chevalley–Warning's theorem)\label{C-W}
Let $k$ be a finite field and $f_1,\dots,f_s$ homogeneous polynomials in $n+1$ variables of degree $d_1,\dots,d_s$ with coefficients in $k$. If $n+1 > d_1 + \dots +d_s$ then $f_1,\dots,f_s$ have a non trivial common zero in $k^{n+1}$ \cite{Che35}, \cite{War35}.  
\end{Remark}

Furthermore, we will extensively make use of a unirationality criterion due to F. Enriques \cite[Proposition 10.1.1]{IP99}.  

\begin{Proposition}\label{Enr}
Let $\pi:\mathcal{Q}^h\rightarrow W$ be a quadric bundle over a unirational variety $W$. Then $\mathcal{Q}^h$ is unirational if and only if there exists a unirational subvariety $Z\subset \mathcal{Q}^h$ such that $\pi_{|Z}:Z\rightarrow W$ is dominant. 
\end{Proposition}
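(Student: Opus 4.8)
The plan is to prove both implications, with the interesting direction being the "if" part. For the "only if" direction: suppose $\mathcal{Q}^h$ is unirational. Since $W$ is unirational and $\pi:\mathcal{Q}^h\rightarrow W$ is dominant, we may simply take $Z = \mathcal{Q}^h$ itself; it is a unirational subvariety of $\mathcal{Q}^h$ mapping dominantly (in fact surjectively) onto $W$. So this direction is essentially trivial given the hypotheses, and the content of the statement lies in the converse.

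For the "if" direction, suppose we are given a unirational subvariety $Z\subset\mathcal{Q}^h$ with $\pi_{|Z}:Z\rightarrow W$ dominant. The strategy is the classical one underlying Enriques's criterion: use $Z$ as a "multisection'' and perform a fiberwise projection. Working over the function field $K = k(W)$ — which is a purely transcendental extension of a finite extension of $k$ is not quite right; rather, since $W$ is unirational, $k(W)$ embeds in a purely transcendental extension $k(t_1,\dots,t_m)$, so it suffices to produce a dominant rational map from projective space to $\mathcal{Q}^h$ by first producing a $K$-unirational structure on the generic fiber $\mathcal{Q}^h_\eta$. The generic fiber $\mathcal{Q}^h_\eta$ is a quadric hypersurface in $\mathbb{P}^{h+1}_K$. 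The generic fiber $Z_\eta$ of $\pi_{|Z}$ is a nonempty (since $\pi_{|Z}$ is dominant) $K$-variety contained in $\mathcal{Q}^h_\eta$, and it is unirational over $k$, hence after passing to the appropriate extension it provides a point of $\mathcal{Q}^h_\eta$ over a field $K'$ finite over $K$ — but more is true: since $Z$ is $k$-unirational and dominates $W$, $Z_\eta$ is $K$-unirational, so in particular $\mathcal{Q}^h_\eta$ has a $K$-point (a $K$-point of $Z_\eta$ obtained from the unirational parametrization, which is defined over $K$). A smooth quadric with a rational point over a field is rational over that field, via projection from the point; for the generically nondegenerate quadric $\mathcal{Q}^h_\eta$ this gives that $\mathcal{Q}^h_\eta$ is $K$-unirational. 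Then, arguing exactly as in the proof of Lemma \ref{gfib}, $K$-unirationality of the generic fiber together with $k$-unirationality of $W$ (and hence of a model whose function field is $K$) yields $k$-unirationality of $\mathcal{Q}^h$.

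The one point requiring care — and the main obstacle — is the extraction of a $K$-rational point of $\mathcal{Q}^h_\eta$ from the data of $Z$. A priori $Z$ is only unirational as a $k$-variety, so a dominant map $\mathbb{P}^{\dim Z}_k\dasharrow Z$ need not interact well with the fibration $\pi_{|Z}$; one must use that $\pi_{|Z}$ is dominant to see that the composite $\mathbb{P}^{\dim Z}_k\dasharrow Z\rightarrow W$ is dominant, and then spread out: the generic fiber $Z_\eta$ receives a dominant rational map from the generic fiber of $\mathbb{P}^{\dim Z}_k\dasharrow W$, which is an open subset of an affine space over $K$, hence $Z_\eta$ is $K$-unirational and in particular has a $K$-point. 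Once this is in hand, the quadric $\mathcal{Q}^h_\eta\subset\mathbb{P}^{h+1}_K$, being generically nondegenerate as a quadric over $K$ (possibly singular but not a double hyperplane over $K$), is $K$-unirational by projection from the $K$-point. This is the substantive input; the rest is the routine "descend unirationality of the generic fiber to unirationality of the total space over $k$'' argument already used for Lemma \ref{gfib}, combined with unirationality of $W$.
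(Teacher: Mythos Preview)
Your forward direction is fine (and simpler than the paper's, which takes $Z = \overline{\psi(H)}$ for a general $(n-h)$-plane $H$).

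The backward direction, however, has a genuine gap. You claim that the generic fiber of the composite $\mathbb{P}^{\dim Z}_k \dasharrow Z \to W$ is ``an open subset of an affine space over $K$,'' and therefore that $Z_\eta$ is $K$-unirational and has a $K$-point. This is false in general. Take $Z = W = \mathbb{P}^1_k$ with $\pi_{|Z}$ the squaring map $[x:y]\mapsto[x^2:y^2]$: then $Z$ is rational and dominates $W$, but the generic fiber $Z_\eta = \operatorname{Spec} K[\sqrt{t}]$ has no $K$-point. The point is that fibers of a rational map $\mathbb{P}^N\dasharrow W$ are cut out by polynomial (not linear) equations and need not be $K$-rational or even have $K$-points. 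So there is no way to extract a $k(W)$-rational point of $\mathcal{Q}^h_\eta$ from $Z$ alone.

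The fix, which is what the paper does, is to stop insisting on a $k(W)$-point and instead base change to $Z$: the fiber product $\mathcal{Q}^h\times_W Z \to Z$ is a quadric bundle admitting the tautological rational section $z\mapsto(z,z)$, hence its generic fiber has a $k(Z)$-point and is $k(Z)$-rational. Thus $\mathcal{Q}^h\times_W Z$ is birational to $Z\times\mathbb{P}^h$, which is unirational since $Z$ is; as $\mathcal{Q}^h\times_W Z\to\mathcal{Q}^h$ is dominant, $\mathcal{Q}^h$ is unirational. Equivalently, you could base change along $\mathbb{P}^{\dim Z}\dasharrow W$ and use the point of the generic fiber coming from $\mathbb{P}^{\dim Z}\dasharrow Z\subset\mathcal{Q}^h$. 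Either way, the key idea you are missing is that the point lives naturally over $k(Z)$, not over $k(W)$.
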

\begin{proof}
Assume that $\mathcal{Q}^{h}$ is unirational. Then there exists a dominant rational map $\psi:\mathbb{P}^n\dasharrow\mathcal{Q}^{h}$. If $H\subset\mathbb{P}^n$ is a general $(n-h)$-plane then $Z = \overline{\psi(H)}\subset \mathcal{Q}^{h}$ is unirational and transverse to the fibration $\pi:\mathcal{Q}^h\rightarrow W$ that is $\pi_{|Z}:Z\rightarrow W$ is dominant.      

Now, assume that there exists a unirational subvariety $Z\subset \mathcal{Q}^h$ such that $\pi_{|Z}:Z\rightarrow W$ is dominant. Consider the fiber product 
\[
  \begin{tikzpicture}[xscale=2.6,yscale=-1.3]
    \node (A0_0) at (0, 0) {$\mathcal{Q}^h\times_{W}Z$};
    \node (B) at (1, 0) {$\mathcal{Q}^h$};
    \node (A1_0) at (0, 1) {$Z$};
    \node (A1_1) at (1, 1) {$W$};
    \path (A0_0) edge [->]node [auto] {$\scriptstyle{}$} (B);
    \path (B) edge [->]node [auto] {$\scriptstyle{\pi}$} (A1_1);
    \path (A1_0) edge [->]node [auto] {$\scriptstyle{\pi_{|Z}}$} (A1_1);
    \path (A0_0) edge [->]node [auto] {$\scriptstyle{}$} (A1_0);
  \end{tikzpicture}
\]
and note that $\mathcal{Q}^h\times_{W}Z\rightarrow Z$ is a quadric bundle admitting a rational section $Z\dasharrow \mathcal{Q}^h\times_{W}Z$. Such rational section yields a point of the quadric $\mathcal{Q}:=\mathcal{Q}^h\times_{W}Z$ over the function field $k(Z)$, and by projecting from this point we get that $\mathcal{Q}$ is rational over $k(Z)$, and hence $\mathcal{Q}^h\times_{W}Z$ is birational to $Z\times\mathbb{P}^h$ over $k$. Since $Z$ is unirational then $\mathcal{Q}^h\times_{W}Z$ is unirational, and since $\mathcal{Q}^h$ is dominated by $\mathcal{Q}^h\times_{W}Z$ we conclude that $\mathcal{Q}^h$ is unirational as well.  
\end{proof}

\begin{Remark}\label{not0}
Consider a quadric bundle of the form (\ref{Cox_g}) and assume that one of the $\sigma_{i,i}$, say $\sigma_{0,0}$, is identically zero. As noticed in \cite[Definition 21]{Sc19a} $\{y_1=\dots = y_{h+1} = 0\}$ yields a rational section of $\mathcal{Q}^{h}\rightarrow\mathbb{P}^{n-h}$ and hence $\mathcal{Q}^{h}$ is rational. Furthermore, if $d_{i,i} < 0$ then $\sigma_{i,i} = 0$ and hence we have, from the previous discussion, that $\mathcal{Q}^{h}$ is rational. So all through the paper we will assume that $d_{i,i}\geq 0$ and $\sigma_{i,i}\neq 0$ for all $i = 0,\dots, h+1$. 
\end{Remark}

Next we derive some explicit formulas for the anti-canonical divisor of a quadric bundle. 

\begin{Proposition}\label{ac_emb}
Let $\pi:\mathcal{Q}^h\subset\mathbb{P}(\mathcal{E})\rightarrow\mathbb{P}^{n-h}$ be a quadric bundle with discriminant of degree $\delta_{\mathcal{Q}^h}$. Then 
$$-K_{\mathcal{Q}^h} = \frac{(n-h+1)(h+2)-hc_1(\mathcal{E})-\delta_{\mathcal{Q}^h}}{h+2} \overline{H}_1+h\overline{H}_2$$
where $\overline{H}_1 = \pi^{*}\mathcal{O}_{\mathbb{P}^{n-h}}(1)_{|\mathcal{Q}^h}$ and $\overline{H}_2 = \mathcal{O}_{\mathbb{P}(\mathcal{E})}(1)_{|\mathcal{Q}^h}$.
\end{Proposition}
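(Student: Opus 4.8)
The natural strategy is to view $\mathcal{Q}^h$ as a Cartier divisor in the smooth projective bundle $\overline{\pi}\colon\mathbb{P}(\mathcal{E})\to\mathbb{P}^{n-h}$ and to apply adjunction. Write $H_1=\overline{\pi}^{*}\mathcal{O}_{\mathbb{P}^{n-h}}(1)$ and $H_2=\mathcal{O}_{\mathbb{P}(\mathcal{E})}(1)$, so that $\Pic(\mathbb{P}(\mathcal{E}))=\mathbb{Z}H_1\oplus\mathbb{Z}H_2$ and $\overline{H}_1,\overline{H}_2$ are their restrictions to $\mathcal{Q}^h$. The plan is: (i) compute $K_{\mathbb{P}(\mathcal{E})}$ in terms of $H_1$, $H_2$ and $c_1(\mathcal{E})$; (ii) identify the class $[\mathcal{Q}^h]\in\Pic(\mathbb{P}(\mathcal{E}))$ in terms of $H_1$, $H_2$ and $\deg\mathcal{L}$; (iii) re-express $\deg\mathcal{L}$ through $\delta_{\mathcal{Q}^h}$ via the discriminant; and (iv) combine these through $K_{\mathcal{Q}^h}=\big(K_{\mathbb{P}(\mathcal{E})}+[\mathcal{Q}^h]\big)\big|_{\mathcal{Q}^h}$ and simplify. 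Since $\mathcal{Q}^h$ is cut out by a section of a line bundle, $\omega_{\mathcal{Q}^h}$ is the line bundle furnished by this adjunction formula irrespective of the singularities of $\mathcal{Q}^h$, so the statement is meaningful as stated.

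For step (i) I would use the relative Euler sequence $0\to\Omega_{\mathbb{P}(\mathcal{E})/\mathbb{P}^{n-h}}\otimes\mathcal{O}_{\mathbb{P}(\mathcal{E})}(1)\to\overline{\pi}^{*}\mathcal{E}\to\mathcal{O}_{\mathbb{P}(\mathcal{E})}(1)\to 0$ together with the relative cotangent sequence of $\overline{\pi}$; taking top exterior powers yields the standard identity $\omega_{\mathbb{P}(\mathcal{E})}\cong\overline{\pi}^{*}\big(\omega_{\mathbb{P}^{n-h}}\otimes\det\mathcal{E}\big)\otimes\mathcal{O}_{\mathbb{P}(\mathcal{E})}(-(h+2))$, i.e. $-K_{\mathbb{P}(\mathcal{E})}=\big(n-h+1-c_1(\mathcal{E})\big)H_1+(h+2)H_2$, where I write $c_1(\mathcal{E})$ for the degree of $\det\mathcal{E}$ on $\mathbb{P}^{n-h}$ and use the normalization in which $\overline{\pi}_{*}\mathcal{O}_{\mathbb{P}(\mathcal{E})}(1)\cong\mathcal{E}$ (consistent with the toric presentation $\mathcal{T}_{a_0,\dots,a_{h+1}}$ above). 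For step (ii), since $\mathcal{Q}^h=\{\sigma=0\}$ with $\sigma$ a section of $\mathcal{O}_{\mathbb{P}(\mathcal{E})}(2)\otimes\overline{\pi}^{*}\mathcal{L}$, we have $[\mathcal{Q}^h]=2H_2+(\deg\mathcal{L})H_1$. For step (iii), view $\sigma$ as a symmetric bundle morphism $\mathcal{E}^{\vee}\to\mathcal{E}\otimes\mathcal{L}$; computing in a local trivialization of $\mathcal{E}$ shows that $\det\sigma$ is a nonzero global section of the line bundle $(\det\mathcal{E})^{\otimes 2}\otimes\mathcal{L}^{\otimes(h+2)}$ on $\mathbb{P}^{n-h}$ whose zero divisor is exactly $D_{\mathcal{Q}^h}$; comparing degrees gives $\delta_{\mathcal{Q}^h}=2c_1(\mathcal{E})+(h+2)\deg\mathcal{L}$, hence $\deg\mathcal{L}=\big(\delta_{\mathcal{Q}^h}-2c_1(\mathcal{E})\big)/(h+2)$. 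A quick check that for a splitting quadric bundle of multidegree $(d_{0,0},\dots,d_{h+1,h+1})$ this recovers $\delta_{\mathcal{Q}^h}=\sum_i d_{i,i}$ confirms the signs.

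Assembling, step (iv) gives $-K_{\mathcal{Q}^h}=\big(-K_{\mathbb{P}(\mathcal{E})}-[\mathcal{Q}^h]\big)\big|_{\mathcal{Q}^h}=\big(n-h+1-c_1(\mathcal{E})-\deg\mathcal{L}\big)\overline{H}_1+(h+2-2)\overline{H}_2$; substituting the value of $\deg\mathcal{L}$ from (iii) and clearing the denominator $h+2$ turns the $\overline{H}_1$-coefficient into $\tfrac{(n-h+1)(h+2)-hc_1(\mathcal{E})-\delta_{\mathcal{Q}^h}}{h+2}$, while the $\overline{H}_2$-coefficient is $h$, which is the assertion. I do not expect any conceptual difficulty here: the one place that needs genuine care rather than ideas is the bookkeeping of conventions — fixing the normalization of $\mathbb{P}(\mathcal{E})$ once and for all so that the sign of $c_1(\mathcal{E})$ in step (i) is compatible with that produced by the discriminant computation in step (iii) — together with the elementary but slightly delicate verification in (iii) that the vanishing locus of $\det\sigma$ has the right degree; the splitting-bundle sanity check above is the safeguard against an error.
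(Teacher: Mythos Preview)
Your proposal is correct and follows essentially the same approach as the paper: the paper's proof simply records the class $[\mathcal{Q}^h]\sim\frac{\delta_{\mathcal{Q}^h}-2c_1(\mathcal{E})}{h+2}H_1+2H_2$ (which is exactly your steps (ii)--(iii) combined) and then says ``use the relative Euler's sequence on $\mathbb{P}(\mathcal{E})$ and the adjunction formula'', which is your steps (i) and (iv). Your write-up is a fully unpacked version of the paper's two-line sketch, including the discriminant/degree identity and the convention sanity check that the paper leaves implicit.
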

\begin{proof}
First note that $\mathcal{Q}^h\subset\mathbb{P}(\mathcal{E})$ is a divisor of class 
\stepcounter{thm}
\begin{equation}\label{cls}
\mathcal{Q}^h\sim\frac{\delta_{\mathcal{Q}^h}-2c_1(\mathcal{E})}{h+2}H_1 + 2 H_2
\end{equation}
where $H_1$ is the pull-back to $\mathbb{P}(\mathcal{E})$ of $\mathcal{O}_{\mathbb{P}^{n-h}}(1)$ and $H_2$ is the divisor class corresponding to $\mathcal{O}_{\mathbb{P}(\mathcal{E})}(1)$. To conclude it is enough to use the relative Euler's sequence on $\mathbb{P}(\mathcal{E})$ and the adjunction formula.
\end{proof}

\begin{Proposition}\label{topsi}
Let $\pi:\mathcal{Q}^h\subset\mathbb{P}(\mathcal{E})\rightarrow\mathbb{P}^{n-h}$ be a quadric bundle with discriminant of degree $\delta_{\mathcal{Q}^h}$. Denote by $c_i = c_i(\mathcal{E})$ the Chern classes of $\mathcal{E}$, and define recursively the polynomials 
$$
g_i(c_1,\dots,c_i) = c_1g_{i-1}-c_2g_{i-2}+\dots +(-1)^{i-1}c_ig_0
$$
setting $g_{0} = 1$. Then 
$$
(-K_{\mathcal{Q}^{h}})^n = \sum_{i=0}^{n-h}\binom{n}{n-h-i} \left(\frac{\delta_{\mathcal{Q}^h}-2c_1}{h+2}g_{i-1}+2g_i\right) \left(\frac{(n-h+1)(h+2)-hc_1-\delta_{\mathcal{Q}^h}}{h+2}\right)^{n-h-i}h^{h+i}.
$$ 
\end{Proposition}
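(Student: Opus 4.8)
The plan is to compute $(-K_{\mathcal{Q}^h})^n$ as an intersection number on the ambient projective bundle $\mathbb{P}(\mathcal{E})$ and then to push everything down to the base $\mathbb{P}^{n-h}$ using the Grothendieck relation. By Proposition~\ref{ac_emb} we may write $-K_{\mathcal{Q}^h} = a\,\overline{H}_1 + h\,\overline{H}_2$ with $a = \bigl((n-h+1)(h+2)-hc_1-\delta_{\mathcal{Q}^h}\bigr)/(h+2)$, where $\overline{H}_1$ and $\overline{H}_2$ are the restrictions to $\mathcal{Q}^h$ of the generators $H_1 = \pi^{*}\mathcal{O}_{\mathbb{P}^{n-h}}(1)$ and $H_2 = c_1(\mathcal{O}_{\mathbb{P}(\mathcal{E})}(1))$ of $\Pic(\mathbb{P}(\mathcal{E}))$; by (\ref{cls}) the class of $\mathcal{Q}^h$ inside $\mathbb{P}(\mathcal{E})$ is $b\,H_1 + 2\,H_2$ with $b = (\delta_{\mathcal{Q}^h}-2c_1)/(h+2)$. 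Since $\dim\mathbb{P}(\mathcal{E}) = n+1$, the projection formula gives
\[
(-K_{\mathcal{Q}^h})^n \;=\; \int_{\mathbb{P}(\mathcal{E})}(a\,H_1+h\,H_2)^n\cdot(b\,H_1+2\,H_2).
\]

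Next I would record the structure of the Chow ring of $\mathbb{P}(\mathcal{E}) = \mathbb{P}\bigl(\bigoplus_{i=0}^{h+1}\mathcal{O}_{\mathbb{P}^{n-h}}(a_i)\bigr)$: the relation $H_1^{n-h+1}=0$ coming from the base, and the Grothendieck relation $\prod_{i=0}^{h+1}(H_2-a_iH_1)=0$, which holds because the divisors $\{y_i=0\}$, of class $H_2-a_iH_1$, have empty common intersection in $\mathbb{P}(\mathcal{E})$. Writing $c_k := c_k(\mathcal{E})$ for the Chern numbers (so $c_k(\mathcal{E}) = c_kH_1^k$ and $c_k = e_k(a_0,\dots,a_{h+1})$), the Grothendieck relation reads $H_2^{h+2} = \sum_{k=1}^{h+2}(-1)^{k-1}c_kH_1^kH_2^{h+2-k}$. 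Multiplying by powers of $H_2$ and applying $\pi_{*}$, which satisfies $\pi_{*}(H_2^{h+1})=1$ and $\pi_{*}(H_2^{m})=0$ for $0\le m\le h$, an induction on $\ell$ shows that $\pi_{*}(H_2^{h+1+\ell}) = g_\ell H_1^\ell$, the integers $g_\ell$ obeying exactly the recursion $g_\ell = c_1g_{\ell-1}-c_2g_{\ell-2}+\dots+(-1)^{\ell-1}c_\ell g_0$, $g_0=1$ of the statement (these are, up to sign, the Segre classes of $\mathcal{E}$); we set $g_{-1}=0$. In particular $\int_{\mathbb{P}(\mathcal{E})}H_1^pH_2^q = g_{q-h-1}$ whenever $p+q=n+1$, and this vanishes unless $p\le n-h$.

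Then I would simply expand: $(aH_1+hH_2)^n = \sum_j\binom{n}{j}a^jh^{n-j}H_1^jH_2^{n-j}$, and multiplying by $bH_1+2H_2$ the term from $2H_2$ contributes $2\sum_{j=0}^{n-h}\binom{n}{j}a^jh^{n-j}g_{n-h-j}$ while the term from $bH_1$ contributes $b\sum_{j=0}^{n-h-1}\binom{n}{j}a^jh^{n-j}g_{n-h-1-j}$. Substituting $i=n-h-j$ in both sums — which converts the Segre index to $i$ in the first sum and to $i-1$ in the second — and combining (using $g_{-1}=0$ to let the second sum start at $i=0$) gives
\[
(-K_{\mathcal{Q}^h})^n = \sum_{i=0}^{n-h}\binom{n}{n-h-i}\bigl(b\,g_{i-1}+2\,g_i\bigr)\,a^{\,n-h-i}\,h^{\,h+i},
\]
which is the asserted formula once the values of $a$ and $b$ are inserted.

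The computation is routine; the points that require care are choosing the convention for $\mathbb{P}(\mathcal{E})$ so that the signs in the Grothendieck relation reproduce precisely the recursion defining the $g_\ell$, keeping track of the vanishing ranges ($H_1^{n-h+1}=0$, $g_{-1}=0$) so that the summation limits come out right, and performing the two reindexings so that the $bH_1$ and $2H_2$ contributions merge into the single factor $b\,g_{i-1}+2\,g_i$. I do not expect any genuinely hard step.
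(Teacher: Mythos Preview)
Your proposal is correct and follows essentially the same route as the paper: both compute $(-K_{\mathcal{Q}^h})^n$ by passing to the ambient $\mathbb{P}(\mathcal{E})$, multiplying by the class $bH_1+2H_2$ of $\mathcal{Q}^h$, and reducing the resulting monomials $H_1^{n-h-i}H_2^{h+i+1}$ via the Grothendieck relation to obtain the recursion for the $g_i$. The only cosmetic difference is that you phrase the reduction in terms of $\pi_*$ and Segre classes, whereas the paper defines $g_i$ directly as the intersection number $H_1^{n-h-i}H_2^{h+i+1}$ and reads off the recursion from the same relation; the subsequent binomial expansion and reindexing are identical.
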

\begin{proof}
Note that $H_1^{i} = 0$ for $i > n-h$ and $H_1^{n-h}H_2^{h+1} = 1$. Now, set $
g_i(c_1,\dots, c_i) := H_1^{n-h-i}H_2^{h+i+1}$. Since 
$$
H_1^{n-h-i}H_2^{h+i+1} = c_1H_1^{n-h-i+1}H_2^{h+i}-c_2H_{1}^{n-h-i+2}H_2^{h+i-1}+\dots +(-1)^{i-1}c_iH_1^{n-h}H_2^{h+1}
$$
we get that $g_i = c_1g_{i-1}-c_2g_{i-2}+\dots +(-1)^{i-1}c_ig_0$ and so the $g_i$ can be computed recursively from $g_0 = H_1^{n-h}H_2^{h+1} = 1$. Therefore, plugging-in (\ref{cls}) we have
$$
\overline{H}_1^{n-h-j}\overline{H}_2^{h+j} = H_1^{n-h-j}H_2^{h+j}\left(\frac{\delta_{\mathcal{Q}^h}-2c_1(\mathcal{E})}{h+2}H_1 + 2 H_2\right) = \frac{\delta_{\mathcal{Q}^h}-2c_1}{h+2}g_{j-1} + 2g_j
$$
where we set $g_{-1} = 0$. Finally, Proposition \ref{ac_emb} yields the claim. 
\end{proof}

\begin{Remark}\label{topsiR}
Let $\pi:\mathcal{Q}^{n-1}\rightarrow\mathbb{P}^1$ be a quadric bundle with discriminant of degree $\delta_{\mathcal{Q}^{n-1}}$. Then Proposition \ref{topsi} yields 
$$
(-K_{\mathcal{Q}^{n-1}})^n = \frac{2n(n-1)^{n-1}}{n+1}(2n+2-nc_1+c_1-\delta_{\mathcal{Q}^{n-1}}) + (n-1)^n\frac{\delta_{\mathcal{Q}^{n-1}}+2nc_1}{n+1} = (n-1)^{n-1}(4n-\delta_{\mathcal{Q}^{n-1}}).
$$
For $n = 2$, that is when $\mathcal{Q}^1$ is a conic bundle, we get that $(-K_{\mathcal{Q}^1})^2 = 8-\delta_{\mathcal{Q}^1}$ \cite[Page 5]{Ko17}.
\end{Remark}

\section{Divisors in products of projective spaces}\label{QBnm}
In this section we study the unirationality of divisors of bidegree $(d,2)$ in $\mathbb{P}^1\times\mathbb{P}^n$. This will be crucial in the proof of Theorem \ref{th_A}.

\begin{Proposition}\label{Ott_G}
Let $\pi:\mathcal{Q}^{n-1}\rightarrow \mathbb{P}^{1}$ be a divisor of bidegree $(d,2)$ with $1 < d < n$ defined by an equation of the form
$$
\mathcal{Q}^{n-1} = \left\lbrace \sum_{i=0}^d x_0^{d-i}x_1^i f_i = 0\right\rbrace\subset\mathbb{P}^{1}_{(x_0,x_1)}\times\mathbb{P}^{n}_{(y_0,\dots,y_n)}
$$
with $f_i\in k[y_0,\dots,y_n]_2$. Consider the matrix 
$$
M_{(z_0,\dots,z_{d-1})} = 
\left(
\begin{array}{ccc}
0 & z_0 & f_0\\ 
-z_0 & z_1 & f_1\\ 
\vdots & \vdots & \vdots\\ 
-z_{d-2} & z_{d-1} & f_{d-1}\\ 
-z_{d-1} & 0 & f_d
\end{array} 
\right)
$$
and let $X_{(\overline{z}_0,\dots,\overline{z}_{d-1})} = \{\rank(M_{(\overline{z}_0,\dots,\overline{z}_{d-1})}) < 3\}\subset\mathbb{P}^n$ be the complete intersection of $d-1$ quadrics defined by the $3\times 3$ minors of $M_{(\overline{z}_0,\dots,\overline{z}_{d-1})}$ for a fixed $(\overline{z}_0,\dots,\overline{z}_{d-1})\in k^{d}\setminus\{(0,\dots,0)\}$.

If for some $(\overline{z}_0,\dots,\overline{z}_{d-1})\in k^{d}\setminus\{(0,\dots,0)\}$ the complete intersection $X_{(\overline{z}_0,\dots,\overline{z}_{d-1})}\subset\mathbb{P}^n$ is unirational then $\mathcal{Q}^{n-1}$ is unirational.
\end{Proposition}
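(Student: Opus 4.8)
The plan is to apply Enriques's unirationality criterion (Proposition~\ref{Enr}) to the quadric bundle $\pi:\mathcal{Q}^{n-1}\to\mathbb{P}^1$: since $\mathbb{P}^1$ is rational, it suffices to exhibit a unirational subvariety $Z\subseteq\mathcal{Q}^{n-1}$ with $\pi|_Z:Z\to\mathbb{P}^1$ dominant. I will construct $Z$ as the closure of the image of the given unirational complete intersection $X:=X_{(\overline z_0,\dots,\overline z_{d-1})}\subset\mathbb{P}^n$ under an explicit rational map $\phi:X\dashrightarrow\mathcal{Q}^{n-1}$ read off from the matrix $M_{(\overline z)}$.

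Fix $\overline z\in k^d\setminus\{(0,\dots,0)\}$ with $X$ unirational, and write $c_1=(0,-\overline z_0,\dots,-\overline z_{d-1})$ and $c_2=(\overline z_0,\dots,\overline z_{d-1},0)$ for the first two columns of $M_{(\overline z)}$ and $c_3(y)=(f_0(y),\dots,f_d(y))$ for the third. A one-line back-substitution shows that $c_1,c_2$ are linearly independent whenever $\overline z\neq 0$, so $X$ is precisely the locus $\{[y]:c_3(y)\in\langle c_1,c_2\rangle\}$, and on the open subset of $X$ where $c_3(y)\neq 0$ we get a morphism $\psi:X\dashrightarrow\mathbb{P}^1$ sending $[y]$ to the unique $[\lambda:\mu]$ for which $c_3(y)$ is proportional to $\lambda c_1+\mu c_2$. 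Put $\phi([y]):=\bigl([\lambda(y):\mu(y)],[y]\bigr)\in\mathbb{P}^1\times\mathbb{P}^n$. The crucial computation is the polynomial identity
\[
\sum_{i=0}^d x_0^{d-i}x_1^i\,(\lambda c_1+\mu c_2)_i\;=\;(\mu x_0-\lambda x_1)\sum_{k=0}^{d-1}\overline z_k\,x_0^{d-1-k}x_1^k,
\]
whose right-hand side vanishes at $[x_0:x_1]=[\lambda:\mu]$; this shows that $\phi([y])$ lies on $\{\sum_i x_0^{d-i}x_1^i f_i=0\}=\mathcal{Q}^{n-1}$, so $\phi$ really is a rational map $X\dashrightarrow\mathcal{Q}^{n-1}$. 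I expect this to be the main content of the argument: realizing that the complete intersection $X$ is exactly what parametrizes the points of $\mathcal{Q}^{n-1}$, and verifying the identity above.

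Now put $Z:=\overline{\phi(X)}\subseteq\mathcal{Q}^{n-1}$. The second projection $\mathbb{P}^1\times\mathbb{P}^n\to\mathbb{P}^n$ restricts to an inverse of $\phi$ on $\phi(X)$, so $\phi:X\dashrightarrow Z$ is birational and $Z$ is unirational, being dominated by the unirational $X$. What remains---and where care is needed---is to check that $\pi|_Z:Z\to\mathbb{P}^1$ is dominant, i.e. that $\psi=\pi\circ\phi$ is non-constant. Choosing rows $p\neq q$ of $M_{(\overline z)}$ whose $2\times2$ block in the first two columns is invertible, on $X$ each $f_r$ is a fixed linear combination of $f_p$ and $f_q$, so under the induced isomorphism $\langle c_1,c_2\rangle\cong k^2$ the map $\psi$ becomes the restriction to $X$ of $[y]\mapsto[f_p(y):f_q(y)]$. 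Since $X$ is unirational it is irreducible, and being a complete intersection of $d-1$ quadrics it has dimension $n-d+1\ge 2$ (here $d<n$ gives $\dim X\ge 2$ and $d>1$ ensures $X\subsetneq\mathbb{P}^n$); hence $[f_p:f_q]$ is non-constant on $X$ unless $f_p$ and $f_q$ are proportional quadrics, a degenerate situation to be handled separately. Granting that $\psi$ is dominant, $\pi|_Z$ is dominant and Proposition~\ref{Enr} gives the unirationality of $\mathcal{Q}^{n-1}$.
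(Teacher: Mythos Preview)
Your approach and the paper's are the same at the core: both transport $X_{(\overline z)}$ birationally into $\mathcal{Q}^{n-1}$ to obtain a unirational subvariety dominating $\mathbb{P}^1$, then invoke Proposition~\ref{Enr}. The paper packages this through Ottem's Mori chamber decomposition of $\mathcal{Q}^{n-1}$: there is a small modification $\psi:\mathcal{Q}^{n-1}\dashrightarrow\mathcal{Q}^{n-1}_+\subset\mathbb{P}^{d-1}_{(z)}\times\mathbb{P}^n_{(y)}$ with $\mathcal{Q}^{n-1}_+=\{\rank M_{(z)}<3\}$, and $X_{(\overline z)}$ is literally a fibre of the second projection $\widetilde\pi:\mathcal{Q}^{n-1}_+\to\mathbb{P}^{d-1}$. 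Your map $\phi$ is nothing but the restriction of $\psi^{-1}$ to that fibre, and your polynomial identity is exactly the computation underlying the birationality of $\psi$. Your route is more elementary, bypassing the Mori dream space apparatus; the paper's gives a cleaner global picture and explains structurally where the matrix $M_{(z)}$ comes from.

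The gap you flag (``a degenerate situation to be handled separately'', ``granting that $\psi$ is dominant'') is genuine and you should close it rather than defer it. The issue is not whether $f_p$ and $f_q$ are proportional \emph{as polynomials}, but whether $af_p+bf_q$ vanishes \emph{on $X$} for some $(a,b)\neq(0,0)$; your phrasing conflates the two. One clean way to finish: if $c_3(y)\equiv 0$ on $X$ then $\mathbb{P}^1\times X\subset\mathcal{Q}^{n-1}$ is already a unirational subvariety dominating $\mathbb{P}^1$ and you are done. Otherwise the image of $X$ under $y\mapsto c_3(y)\in\langle c_1,c_2\rangle\cong k^2$ is not $\{0\}$; if it were a single line, then one extra quadric $af_p+bf_q$ (not in the span of the $d-1$ defining quadrics $f_r-\alpha_rf_p-\beta_rf_q$, since the latter never produce a relation among $f_p,f_q$ alone when the $f_i$ are linearly independent) would vanish on $X$, forcing $\dim X\le n-d$, contradicting the standing hypothesis that $X$ is a complete intersection of $d-1$ quadrics. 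The paper itself is terse on exactly this point: after writing the equations of the strict transform $\widetilde X$ it simply asserts that $\pi_{|\widetilde X}$ is dominant.
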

\begin{proof}
By \cite[Theorem 1.1 (ii)]{Ott15} if $\mathcal{Q}^{n-1}$ is normal and $\mathbb{Q}$-factorial then it is a Mori dream space and its Mori chamber decomposition is as follows
$$
\begin{tikzpicture}[line cap=round,line join=round,>=triangle 45,x=1.0cm,y=1.0cm]
\clip(-2.9,-0.1) rectangle (1.5,2.1);
\draw [->,line width=0.4pt] (0.,0.) -- (1.,0.);
\draw [->,line width=0.4pt] (0.,0.) -- (0.,1.);
\draw [->,line width=0.4pt] (0.,0.) -- (-1.,2.);
\begin{scriptsize}
\draw [fill=black] (1.,0.) circle (0.5pt);
\draw[color=black] (1.19,0.26) node {$\overline{H}_1$};
\draw [fill=black] (0.,1.) circle (0.5pt);
\draw[color=black] (0.14,1.21) node {$\overline{H}_2$};
\draw [fill=black] (-1.,2.) circle (0.5pt);
\draw[color=black] (-1.9,1.9) node {$-\overline{H}_1+2\overline{H}_2$};
\end{scriptsize}
\end{tikzpicture}
$$
where the effective, movable and nef cones are given by 
$$\Eff(\mathcal{Q}^{n-1}) = \Mov(\mathcal{Q}^{n-1}) = \left\langle \overline{H}_1,-\overline{H}_1+2\overline{H}_2\right\rangle, \: \Nef(\mathcal{Q}^{n-1}) = \left\langle \overline{H}_1,\overline{H}_2\right\rangle$$
and the chamber delimited by $\overline{H}_2$ and $-\overline{H}_1+2\overline{H}_2$ corresponds to a small transformation $\mathcal{Q}^{n-1}_{+}$ of $\mathcal{Q}^{n-1}$. The variety $\mathcal{Q}^{n-1}_{+}\subset \mathbb{P}^{d-1}_{(z_0,\dots,z_{d-1})}\times\mathbb{P}^{n}_{(y_0,\dots,y_n)}$ is defined by 
$$
\mathcal{Q}^{n-2}_{+} = \{\rank(M_{(z_0,\dots,z_{d-1})}) < 3\} \subset \mathbb{P}^{d-1}_{(z_0,\dots,z_{d-1})}\times\mathbb{P}^{n}_{(y_0,\dots,y_n)}.
$$
Consider the rational map
$$
\begin{array}{lccc}
\rho: & \mathbb{P}^{1}_{(x_0,x_1)}\times\mathbb{P}^{n}_{(y_0,\dots,y_n)} & \dasharrow & \mathbb{P}^{d-1}_{(z_0,\dots,z_{d-1})}\\
 & ([x_0:x_1],[y_0:\dots:y_n]) & \mapsto & [\rho_0:\dots:\rho_{d-1}]
\end{array}
$$
where $\rho_i([x_0:x_1],[y_0:\dots:y_n]) = x_0^ix_1^{d-i-1}f_0 + x_0^{i+1}x_1^{d-i}f_1 + \dots + x_1^{d-1}f_i$ for $i = 0,\dots,d-1$. The small transformation $\psi:\mathcal{Q}^{n-1}\dasharrow \mathcal{Q}^{n-1}_{+}$ is given by the restriction to $\mathcal{Q}^{n-1}$ of the map
$$
\begin{array}{ccc}
\mathbb{P}^{1}_{(x_0,x_1)}\times\mathbb{P}^{n}_{(y_0,\dots,y_n)} & \dasharrow & \mathbb{P}^{d-1}_{(z_0,\dots,z_{d-1})}\times\mathbb{P}^{n}_{(y_0,\dots,y_n)}\\
([x_0:x_1],[y_0:\dots:y_n]) & \mapsto & (\rho([x_0:x_1],[y_0:\dots:y_n]),[y_0:\dots :y_n]).
\end{array}
$$
We sum-up the situation in the following diagram
$$
\begin{tikzcd}
                           & \mathcal{Q}^{n-1} \arrow[r, "\psi", dashed] \arrow[ld, "\pi"'] & \mathcal{Q}^{n-1}_+ \arrow[rd, "\widetilde{\pi}"] &                                \\
{\mathbb{P}^1_{(x_0,x_1)}} &                                                              &                                                 & {\mathbb{P}^2_{(z_0,\dots,z_{d-1})}}
\end{tikzcd}
$$
where $\widetilde{\pi}:\mathcal{Q}^{n-1}_{+}\rightarrow\mathbb{P}^2_{(z_0,\dots,z_{d-1})}$ is the restriction of the first projection. Now, by hypotheses there exists a fiber $X_{(\overline{z}_0,\dots,\overline{z}_{d-1})} = \widetilde{\pi}^{-1}([\overline{z}_0:\dots :\overline{z}_{d-1}])\subset\mathbb{P}^n$ of $\widetilde{\pi}$ that is unirational. We have $\overline{z}_i\neq 0$ for some $i$, say $\overline{z}_0\neq 0$. So
$$
X_{(\overline{z}_0,\dots,\overline{z}_{d-1})} = \{\overline{z}_0^2f_1 +(\overline{z}_1^2-\overline{z}_0\overline{z}_i)f_0-\overline{z}_0\overline{z}_1f_i = 0; \text{ for } i = 2,\dots,d\}\subset \mathbb{P}^{n}_{(y_0,\dots,y_n)}. 
$$
The strict transform $\widetilde{X}_{(\overline{z}_0,\dots,\overline{z}_{d-1})}\subset\mathcal{Q}^{n+1}$ of $X_{(\overline{z}_0,\dots,\overline{z}_{d-1})}$ via $\psi$ is cut out in $\mathbb{P}^{1}_{(x_0,x_1)}\times\mathbb{P}^{n}_{(y_0,\dots,y_n)}$ by the equation of $\mathcal{Q}^{n+1}$ together with the relations coming from $\rank(\widetilde{M}_{(\overline{z}_0,\dots,\overline{z}_{d-1})}) < 2$ where 
$$
\widetilde{M}_{(\overline{z}_0,\dots,\overline{z}_{d-1})} = 
\left(
\begin{array}{cccc}
x_1^{d-1}f_0 & x_0x_1^{d-2}f_0+x_1^{d-1}f_1 & \dots & x_0^{d-1}f_0+x_0^{d-2}x_1f_1+\dots +x_1^{d-1}f_{d-1}\\ 
\overline{z}_0 & \overline{z}_1 & \dots & \overline{z}_{d-1}
\end{array} 
\right).
$$
Then  
$$
\widetilde{X}_{(\overline{z}_0,\dots,\overline{z}_{d-1})} \cap \{x_1\neq 0\}= 
\left\lbrace
\begin{array}{l}
\overline{z}_{d-1}x_1^{d-1}f_0 - \overline{z}_0(x_0^{d-1}f_0+x_0^{d-2}x_1f_1+\dots + x_1^{d-1}f_{d-1}) = 0;\\ 
\vdots \\ 
\overline{z}_1x_1f_0 - \overline{z}_0(x_0f_0+x_1f_1) = 0;\\
x_0^{d}f_0 +\dots + x_1^{d}f_d = 0.
\end{array} 
\right.
$$
Therefore, $\widetilde{X}_{(\overline{z}_0,\dots,\overline{z}_{d-1})}$ is unirational and $\pi_{|\widetilde{X}_{(\overline{z}_0,\dots,\overline{z}_{d-1})}}:\widetilde{X}_{(\overline{z}_0,\dots,\overline{z}_{d-1})}\rightarrow\mathbb{P}^1_{(x_0,x_1)}$ is dominant. Hence, Proposition \ref{Enr} yields that $\mathcal{Q}^{n-1}$ is unirational as well.
\end{proof}

\begin{Proposition}\label{333P1}
Let $\pi:\mathcal{Q}^{n-1}\rightarrow \mathbb{P}^{1}$ be a divisor of bidegree $(3,2)$ in $\mathbb{P}^{1}\times\mathbb{P}^{n}$ with a point and otherwise general. If $n\geq 4$ then $\mathcal{Q}^{n-1}$ is unirational.
\end{Proposition}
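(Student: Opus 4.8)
The plan is to reduce, via Proposition \ref{Ott_G}, to Koll\'ar's unirationality theorem for cubic hypersurfaces \cite{Kol02}, in the spirit of Example \ref{ex_in}. Set $d=3$. Since $n\geq 4$ we have $1<d<n$, and a general $\mathcal{Q}^{n-1}$ is smooth, hence normal and $\mathbb{Q}$-factorial, so Proposition \ref{Ott_G} applies and it suffices to exhibit one $(\overline{z}_0,\overline{z}_1,\overline{z}_2)\in k^3\setminus\{(0,0,0)\}$ for which the complete intersection of two quadrics $X_{(\overline{z}_0,\overline{z}_1,\overline{z}_2)}\subset\mathbb{P}^n$ is $k$-unirational.

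I would obtain this point by unwinding the proof of Proposition \ref{Ott_G}. Since $\mathcal{Q}^{n-1}$, together with its point, is general, its $k$-point $p=([\overline{x}_0:\overline{x}_1],[\overline{y}_0:\dots:\overline{y}_n])$ lies outside the base locus of the map $\rho$ and outside the indeterminacy locus of the small modification $\psi:\mathcal{Q}^{n-1}\dashrightarrow\mathcal{Q}^{n-1}_{+}=\{\rank M_{(z_0,z_1,z_2)}<3\}$. Then $[\overline{z}_0:\overline{z}_1:\overline{z}_2]:=\rho(p)$ is a $k$-point of $\mathbb{P}^2$, the image $\psi(p)=(\rho(p),[\overline{y}_0:\dots:\overline{y}_n])$ is a $k$-point of $\mathcal{Q}^{n-1}_{+}$, and so $q:=[\overline{y}_0:\dots:\overline{y}_n]$ is a $k$-point of the fibre $X:=X_{(\overline{z}_0,\overline{z}_1,\overline{z}_2)}=\widetilde{\pi}^{-1}([\overline{z}_0:\overline{z}_1:\overline{z}_2])$. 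As the $f_i$ are general subject only to the single linear relation $\sum_i\overline{x}_0^{3-i}\overline{x}_1^{i}f_i(\overline{y})=0$ imposed by $p\in\mathcal{Q}^{n-1}$, the $(n-2)$-fold $X$ is a general enough complete intersection of two quadrics through $q$; in particular $X$ is irreducible and $q$ is a smooth point of it.

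It remains to prove that such an $X$ is $k$-unirational, uniformly in $n\geq 4$. Since $q$ is a smooth point, projection from $q$ realizes $X$ birationally as a cubic hypersurface in $\mathbb{P}^{n-1}$ (a line through $q$ not contained in $Q_1$ meets $X$ in at most one further point, so projection from $q$ is generically one-to-one): writing the two defining quadrics with $q=[1:0:\dots:0]$ as $Q_i=\{x_0\ell_i(x_1,\dots,x_n)+\kappa_i(x_1,\dots,x_n)=0\}$, with $\ell_1,\ell_2$ independent by smoothness at $q$, this cubic is
$$
Y=\bigl\{\kappa_1(v)\ell_2(v)-\kappa_2(v)\ell_1(v)=0\bigr\}\subset\mathbb{P}^{n-1}_{(v_1,\dots,v_n)},
$$
which for general data is irreducible and not a cone. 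Now $Y$ contains the linear subspace $\Lambda=\{\ell_1=\ell_2=0\}\cong\mathbb{P}^{n-3}$, defined over $k$ and positive dimensional since $n\geq 4$; differentiating $\kappa_1\ell_2-\kappa_2\ell_1$ along $\Lambda$ shows that $Y$ is singular along $\Lambda$ only on the proper subvariety $\Lambda\cap\{\kappa_1=\kappa_2=0\}$, so, $k$ being infinite, $Y$ has a smooth $k$-point. As $\dim Y=n-2\geq 2$, Koll\'ar's theorem \cite[Theorem 1]{Kol02} gives that $Y$, and hence $X$, is $k$-unirational; Proposition \ref{Ott_G} then yields that $\mathcal{Q}^{n-1}$ is unirational.

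The delicate point is not any single computation but checking that the hypothesis ``$\mathcal{Q}^{n-1}$ has a point and is otherwise general'' really transports the point onto an honest fibre $X_{(\overline{z})}$ of $\widetilde{\pi}$ that is irreducible, smooth at the marked point, and whose image under the ensuing projection is an irreducible cubic that is not a cone --- exactly the conditions under which the projection-to-a-cubic step and Koll\'ar's theorem apply. This is the only place where the genericity assumption enters.
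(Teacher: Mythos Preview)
Your argument is correct and shares the same first step as the paper's proof: both use the small modification $\psi$ of Proposition~\ref{Ott_G} to carry the given $k$-point onto a fibre $X_{(\overline z_0,\overline z_1,\overline z_2)}$, a complete intersection of two quadrics in $\mathbb{P}^n$ with a smooth $k$-point $q$, and then reduce to the unirationality of this fibre.

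Where you diverge is in how you handle $X$. The paper treats the cases separately: for $n=4$ it cites the unirationality of a del Pezzo surface of degree four with a point \cite[Summary~9.4.12]{Poo17}; for $\ch(k)=0$ and $n\geq 4$ it invokes \cite[Proposition~2.3]{CSS87} on complete intersections of two quadrics; and for $\ch(k)>0$, $n>5$ it slices $X$ down by general hyperplanes through $q$ (using Bertini over infinite fields \cite[Corollary~3.4.14]{FOV99}) to reach a smooth del Pezzo surface of degree four. You instead project $X$ from $q$ onto a cubic hypersurface $Y\subset\mathbb{P}^{n-1}$, observe that the $(n-3)$-plane $\Lambda=\{\ell_1=\ell_2=0\}$ lies on $Y$ and carries a Zariski-dense set of smooth $k$-points of $Y$, and finish with Koll\'ar's theorem \cite[Theorem~1]{Kol02}. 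This is exactly the mechanism behind the del Pezzo result and also the argument the paper itself deploys later in Proposition~\ref{uni2q} over finite fields; your version works uniformly in $n$ and in the characteristic, and relies only on \cite{Kol02}, which the paper already uses. The trade-off is that the paper's proof defers to well-established black boxes, whereas yours must verify by hand that the projected cubic is irreducible and not a cone for general data --- conditions you correctly flag as the only place genericity is used.
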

\begin{proof}
We will adopt the notation of Proposition \ref{Ott_G}. Up to a change of coordinates we may assume that $p = ([0:1],[0:\dots:0:1])\in \mathcal{Q}^{n-1}$, so that $f_d$ must be of the form 
$$f_d = A(y_0,\dots,y_{n-1})+y_nL(y_0,\dots,y_{n-1})$$ 
with $A\in k[y_0,\dots,y_{n-1}]_2$ and $L\in k[y_0,\dots,y_{n-1}]_1$. The $f_i$ are otherwise general.   
Note that 
$$
\psi(p) = ([f_0(0,\dots,0,1):f_1(0,\dots,0,1):\dots: f_{d-1}(0,\dots,0,1)],[0:\dots:0:1])
$$ 
and since the $f_i$ are general the point $q = \psi(p)\in \mathcal{Q}^{n-1}_{+}$ is well-defined. Now, set $[\overline{z}_0:\dots:\overline{z}_{d-1}] = \widetilde{\pi}(q) = [f_0(0,\dots,0,1):f_1(0,\dots,0,1):\dots:f_{d-1}(0,\dots,0,1)]$. By Proposition \ref{Ott_G} to conclude it is enough to prove that $X_{(\overline{z}_0,\dots,\overline{z}_{d-1})}\subset\mathbb{P}^n$ is unirational. 

Since the $f_i$ are general the variety $X_{(\overline{z}_0,\dots,\overline{z}_{d-1})}\subset\mathbb{P}^n$ is a smooth complete intersection of two quadrics with a point $q\in X_{(\overline{z}_0,\dots,\overline{z}_{d-1})}$. If $n = 4$ then $X_{(\overline{z}_0,\dots,\overline{z}_{2})}$ is a del Pezzo surface of degree four with a point and hence it is unirational \cite[Summary 9.4.12]{Poo17}. If $\ch(k) = 0$ the unirationality of a smooth complete intersection of two quadrics with a point for $n\geq 4$ follows from \cite[Proposition 2.3]{CSS87}.

If $n> 5$ and $\ch{k} > 0$ we take general hyperplane sections of $X_{(\overline{z}_0,\dots,\overline{z}_{d-1})}$ through $q\in X_{(\overline{z}_0,\dots,\overline{z}_{d-1})}$ until we get a surface $S$ with a point. Since $k$ is infinite \cite[Corollary 3.4.14]{FOV99} yields that $S$ is a smooth del Pezzo surface of degree four and hence it is unirational. The strict transform $\widetilde{S}$ of $S$ via $\psi$ is a unirational surface dominating $\mathbb{P}^1_{(x_0,x_1)}$.
\end{proof}

\begin{Corollary}\label{smooth}
Let $\pi:\mathcal{Q}^{n-1}\rightarrow \mathbb{P}^{1}$ be a smooth divisor of bidegree $(3,2)$ in $\mathbb{P}^{1}\times\mathbb{P}^{n}$ over a $C_r$ field $k$ with $\ch(k) = 0$. If $n > 2^r-1$ and $n \geq 4$ then $\mathcal{Q}^{n-1}$ is unirational.
\end{Corollary}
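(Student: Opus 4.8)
The plan is to reduce to Proposition \ref{Ott_G} in the spirit of Proposition \ref{333P1}, the role of the genericity hypothesis in the latter being taken over by the density of $k$-rational points that the $C_r$-hypothesis supplies. Since $\mathcal{Q}^{n-1}$ is smooth it is in particular normal and $\mathbb{Q}$-factorial, and as $d=3$ and $n\geq 4$ we have $1<d<n$, so Proposition \ref{Ott_G} is available; hence it suffices to exhibit a single point $[\overline z_0:\overline z_1:\overline z_2]\in\mathbb{P}^2(k)$ for which the complete intersection of two quadrics $X_{(\overline z_0,\overline z_1,\overline z_2)}\subset\mathbb{P}^n$ is unirational.

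First I would prove that $\mathcal{Q}^{n-1}(k)$ is Zariski dense. For each $t\in\mathbb{P}^1(k)$ the fibre $Q_t=\pi^{-1}(t)\subset\mathbb{P}^n$ is a genuine quadric hypersurface --- no fibre can equal $\mathbb{P}^n$, since otherwise $\mathcal{Q}^{n-1}$ would be reducible, which is impossible as it is smooth and $\mathcal{O}_{\mathbb{P}^1\times\mathbb{P}^n}(3,2)$ is ample. As $k$ is $C_r$ and $n+1>2^r$, Remark \ref{lang} gives $Q_t(k)\neq\emptyset$, and a quadric hypersurface with a rational point over an infinite field has dense rational points. Since $\pi$ is surjective and $\mathbb{P}^1(k)$ is dense, any non-empty open subset $W\subseteq\mathcal{Q}^{n-1}$ has $\pi(W)$ containing a $k$-point $t$, and then $W\cap Q_t$ is a non-empty open subset of $Q_t$, hence contains a $k$-point. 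Therefore $\mathcal{Q}^{n-1}(k)$ is dense.

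Now I would run the construction of Proposition \ref{Ott_G} with $d=3$: let $\psi\colon\mathcal{Q}^{n-1}\dasharrow\mathcal{Q}^{n-1}_{+}$ be the small transformation and $\widetilde{\pi}\colon\mathcal{Q}^{n-1}_{+}\rightarrow\mathbb{P}^2$ the induced fibration, whose fibre over $[\overline z_0:\overline z_1:\overline z_2]$ is $X_{(\overline z_0,\overline z_1,\overline z_2)}$. Since $\ch(k)=0$, generic smoothness yields a dense open $U\subseteq\mathbb{P}^2$ over which $\widetilde{\pi}$ is smooth, so $X_{(\overline z_0,\overline z_1,\overline z_2)}$ is a smooth complete intersection of two quadrics in $\mathbb{P}^n$ whenever $[\overline z_0:\overline z_1:\overline z_2]\in U$. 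The rational map $\widetilde{\pi}\circ\psi\colon\mathcal{Q}^{n-1}\dasharrow\mathbb{P}^2$ is dominant, so the set $V\subseteq\mathcal{Q}^{n-1}$ of points at which $\psi$ is defined and at which $\widetilde{\pi}\circ\psi$ takes a value in $U$ is a non-empty open subset; by density there is $p\in V(k)$. Putting $q=\psi(p)$ and $[\overline z_0:\overline z_1:\overline z_2]=\widetilde{\pi}(q)\in U(k)$, the variety $X_{(\overline z_0,\overline z_1,\overline z_2)}$ is a smooth complete intersection of two quadrics in $\mathbb{P}^n$, $n\geq 4$, with the rational point $q$, hence unirational by \cite[Proposition 2.3]{CSS87} (and by \cite[Summary 9.4.12]{Poo17} when $n=4$). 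Proposition \ref{Ott_G} then gives that $\mathcal{Q}^{n-1}$ is unirational.

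The step I expect to be the main obstacle is the assertion that the general fibre of $\widetilde{\pi}$ is smooth. In Proposition \ref{333P1} this came for free from genericity; for an arbitrary smooth $\mathcal{Q}^{n-1}$ one must instead extract it from generic smoothness in characteristic zero, and the subtle point is that this requires controlling the singularities of the total space $\mathcal{Q}^{n-1}_{+}$, which a priori is only an isomorphism in codimension one away from the flopping locus and hence only known to be regular in codimension one. Two ways around this are to verify that $\mathcal{Q}^{n-1}_{+}$ is actually regular, or to prove a unirationality criterion for possibly singular complete intersections of two quadrics in $\mathbb{P}^n$ ($n\geq 4$) that are geometrically integral and carry a smooth rational point, extending \cite[Proposition 2.3]{CSS87}. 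Apart from this point and the appeal to \cite{CSS87}, the characteristic zero hypothesis is not needed; the inequality $n>2^r-1$ (equivalently $n+1>2^r$) is used solely to produce the rational point on each fibre $Q_t$, and $n\geq 4$ is needed both for Proposition \ref{Ott_G} (which requires $d<n$) and for the unirationality of a complete intersection of two quadrics.
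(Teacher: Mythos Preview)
Your proof follows essentially the same route as the paper's: density of rational points from the $C_r$ hypothesis, then transport via $\psi$ to find a suitable fibre of $\widetilde{\pi}$ to which one applies \cite[Proposition~2.3]{CSS87}, and conclude by Proposition~\ref{Ott_G}.

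The one place where the paper is cleaner is exactly the step you flag as the main obstacle. You ask for a fibre $X_{(\overline z_0,\overline z_1,\overline z_2)}$ that is \emph{globally} smooth, which forces you to worry about the regularity of the total space $\mathcal{Q}^{n-1}_{+}$ in order to invoke generic smoothness. The paper sidesteps this: it only asks for a point $q\in\psi(\mathcal{Q}^{n-1}(k))$ lying in the smooth locus of the morphism $\widetilde{\pi}$, so that the fibre through $q$ is smooth \emph{at $q$}. The smooth locus of any morphism is open; it is non-empty here because $\psi$ is an isomorphism on a dense open, so $\mathcal{Q}^{n-1}_{+}$ has dense regular locus, and generic smoothness in characteristic zero applied to $\widetilde{\pi}$ restricted to that regular locus shows the smooth locus of $\widetilde{\pi}$ is dense. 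A smooth rational point on a complete intersection of two quadrics in $\mathbb{P}^n$, $n\ge 4$, is what \cite[Proposition~2.3]{CSS87} actually requires, so one does not need the whole fibre to be smooth. This dissolves your concern about $\mathcal{Q}^{n-1}_{+}$ without any extra verification.
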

\begin{proof}
Since $n > 2^r-1$ and $k$ is $C_r$ all fibers of $\pi$ have a point. The general fiber of $\pi$ is a smooth quadric and hence the set of its rational point is dense. So, the set of rational points $\mathcal{Q}^{n-1}(k)$ of $\mathcal{Q}^{n-1}$ is also dense. 

Then $\psi(\mathcal{Q}^{n-1}(k))$ is dense in $\mathcal{Q}^{n-1}_{+}$ and since the smooth locus of $\widetilde{\pi}$ is open there is a point $q\in \psi(\mathcal{Q}^{n-1}(k))$ such that the fiber $X_{(\overline{z}_0,\overline{z}_1,\overline{z}_{2})}$ of $\widetilde{\pi}$ through $q$, where $[\overline{z}_0:\overline{z}_1:\overline{z}_{2}] = \widetilde{\pi}(q)$, is smooth at $q$.   

Hence, \cite[Proposition 2.3]{CSS87} yields that $X_{(\overline{z}_0,\overline{z}_1,\overline{z}_{2})}$ is unirational and to conclude it is enough to apply Proposition \ref{Ott_G}.   
\end{proof}

\begin{Remark}\label{12-22}
A divisor of bidegree $(1,2)$ in $\mathbb{P}^{n-h}\times\mathbb{P}^{h+1}$ is rational. Furthermore, if $\mathcal{Q}^{n-1}\subset\mathbb{P}^1\times\mathbb{P}^n$ is a divisor of bidegree $(2,2)$ the variety $X_{(\overline{z}_0,\dots,\overline{z}_{d-1})}$ in Proposition \ref{Ott_G} is a quadric hypersurface. So arguing as in the proof of Proposition \ref{333P1} we get that $\mathcal{Q}^{n-1}$ is unirational provided it has a point. Now, let $\mathcal{Q}^{h}\subset\mathbb{P}^{n-h}\times\mathbb{P}^{h+1}$ be a divisor of bidegree $(2,2)$ having a point $p\in \mathcal{Q}^{h}$, and $L\subset \mathbb{P}^{h+1}$ a general line through the image of $p$. The preimage $\pi_2^{-1}(L)$ of $L$ via the second projection is a divisor of bidegree $(2,2)$ in $\mathbb{P}^{n-h}\times\mathbb{P}^1$ with a point. Hence, by the first part of the remark $\pi_2^{-1}(L)$ is unirational and since it dominates $\mathbb{P}^{n-h}$ Proposition \ref{Enr} yields that $\mathcal{Q}^{h}$ is unirational.         
\end{Remark}

\section{Quadric bundles with positive volume}\label{QBpv}
Let $\pi:\mathcal{Q}^{n-1}\subset\mathcal{T}_{a_0,\dots,a_{n}}\rightarrow\mathbb{P}^1$ be a quadric bundle of multidegree $(d_{0,0},\dots,d_{n,n})$ cut out by an equation as in (\ref{Cox_g}) and set
$$
\mathcal{Q}_j^{n-j-2} = \mathcal{Q}^{n-1}\cap\{y_0 = \dots = y_{j} = 0\}\subset \mathcal{T}_{a_{j+1},\dots,a_{n}}
$$
for $j = 0,\dots,n-2$. Then $\mathcal{Q}_j^{n-j-2}\rightarrow\mathbb{P}^1$ is a quadric bundle with discriminant of degree $
\delta_{\mathcal{Q}_j^{n-j-2}} = d_{j+1,j+1} + \dots + d_{n,n}$. Note that by Proposition \ref{Enr} if $\mathcal{Q}_j^{n-j-2}$ is unirational for some $j = 0,\dots,n-2$ then $\mathcal{Q}^{n-1}$ is also unirational. We will denote by $\sigma = \sigma(x_0,x_1,y_{n-2},y_{n-1},y_{n})$ the polynomial defining $\mathcal{Q}_{n-3}^{1}$ in $\mathcal{T}_{a_{n-2},a_{n-1},a_{n}}$ and by $\rho = \rho(x_0,x_1)$ the discriminant polynomial of the conic bundle $\pi_{|\mathcal{Q}_{n-3}^{1}}:\mathcal{Q}_{n-3}^{1}\rightarrow\mathbb{P}^1$. 

\begin{Lemma}\label{L_1-7}
Let $\pi:\mathcal{Q}^{n-1}\rightarrow\mathbb{P}^1$ be a quadric bundle. If either $\sigma$ splits as the product of two polynomials both depending on $y_{n-2},y_{n-1},y_{n}$; or $\sigma$ does not have a factor depending just on $x_0,x_1$; when $n=2$ the polynomial $\rho$ is not identically zero, and either
\begin{itemize}
\item[(i)] $\delta_{\mathcal{Q}_{n-3}^{1}} \{0,2,4,6\}$ and $\mathcal{Q}_{n-3}^{1}$ has a smooth point; or
\item[(ii)] $\delta_{\mathcal{Q}_{n-3}^{1}} \in \{1,3,5,7\}$;
\end{itemize}
then $\mathcal{Q}^{n-1}$ is unirational.
\end{Lemma}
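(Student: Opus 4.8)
The plan is to reduce to a statement about the conic bundle surface $\mathcal{Q}_{n-3}^1\subset\mathcal{T}_{a_{n-2},a_{n-1},a_n}\to\mathbb{P}^1$ and then invoke the known unirationality results for conic bundles with $(-K)^2>0$ (equivalently $\delta\leq 7$), together with a direct argument in the degenerate cases. By the reduction at the start of Section~\ref{QBpv}, it suffices to show that $\mathcal{Q}_{n-3}^1$ is unirational: by Proposition~\ref{Enr} applied iteratively along the chain $\mathcal{Q}^{n-1}\supset\mathcal{Q}_0^{n-2}\supset\dots\supset\mathcal{Q}_{n-3}^1$, each inclusion $\mathcal{Q}_{j}^{n-j-2}\subset\mathcal{Q}_{j-1}^{n-j-1}$ dominates the base $\mathbb{P}^1$, so unirationality propagates upward. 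Thus the whole problem is two-dimensional.

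Next I would split according to the hypotheses. If $\sigma$ factors as a product of two forms each genuinely involving $y_{n-2},y_{n-1},y_n$, then $\mathcal{Q}_{n-3}^1$ is reducible (a union of two components, each a section-type locus or a rational subvariety over $k(\mathbb{P}^1)$), and one checks directly that at least one component maps dominantly to $\mathbb{P}^1$ and is rational — e.g. a component cut by a linear-in-$y$ equation gives a $\mathbb{P}^1$-bundle, hence rational, handling this case immediately via Proposition~\ref{Enr}. So assume $\sigma$ is irreducible with no factor depending only on $x_0,x_1$; then $\rho\not\equiv 0$ (this is automatic when $n>2$ and assumed when $n=2$), so $\pi_{|\mathcal{Q}_{n-3}^1}$ is a genuine conic bundle with discriminant of degree $\delta_{\mathcal{Q}_{n-3}^1}=d_{n-2,n-2}+d_{n-1,n-1}+d_{n,n}$, and by Remark~\ref{topsiR} we have $(-K_{\mathcal{Q}_{n-3}^1})^2=8-\delta_{\mathcal{Q}_{n-3}^1}$.

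For the range $\delta_{\mathcal{Q}_{n-3}^1}\in\{1,3,5,7\}$ — case (ii) — the surface $\mathcal{Q}_{n-3}^1$ may be singular, but after passing to its minimal resolution (or directly to a smooth relatively minimal conic bundle model) one obtains a smooth conic bundle $S\to\mathbb{P}^1$ with $(-K_S)^2\geq 8-\delta\geq 1>0$; since $\delta$ is odd, $S$ automatically has a $k$-point (the fiber over an odd-degree discriminant point, being a degenerate conic of rank $\leq 2$ with an odd number of geometric intersection behaviour, or more simply: one can find a point by elementary considerations since the generic fiber acquires a point after an odd-degree base change that is forced to be trivial — here I would cite \cite[Corollary 8]{KM17}), and \cite[Corollary 8]{KM17} then gives unirationality. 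For case (i), with $\delta_{\mathcal{Q}_{n-3}^1}\in\{0,2,4,6\}$ and a smooth $k$-point present, the same theorem \cite[Corollary 8]{KM17} (or \cite[Summary 9.4.12]{Poo17} in the del Pezzo range) applies once we confirm $(-K)^2>0$, which holds since $\delta\leq 6<8$. Finally, strict transforms under the birational modifications are taken to transport the unirational subvariety back into $\mathcal{Q}^{n-1}$ while keeping dominance over $\mathbb{P}^1$, and Proposition~\ref{Enr} concludes.

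The main obstacle is the degenerate/reducible case: when $\mathcal{Q}_{n-3}^1$ is singular or reducible one must be careful that passing to a smooth model does not destroy the $k$-rationality of the point, and that the resolution still maps to $\mathbb{P}^1$ with the right discriminant degree; the factorization hypothesis on $\sigma$ is precisely what is needed to handle the worst reducible case by hand, while the "no factor in $x_0,x_1$ alone" hypothesis ensures $\rho\not\equiv 0$ so that the conic bundle structure is non-trivial and the discriminant-degree bookkeeping behind $(-K)^2=8-\delta$ is valid.
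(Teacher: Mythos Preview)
Your high-level reduction --- replace $\mathcal{Q}^{n-1}$ by the conic bundle surface $\mathcal{Q}_{n-3}^1$ and invoke Proposition~\ref{Enr} --- matches the paper exactly, and your treatment of the split-$\sigma$ case is close to the paper's (the paper observes that the intersection of the two components is a rational section of $\pi$). The divergence is in how you handle the possibly \emph{singular} surface $\mathcal{Q}_{n-3}^1$: you want to pass to a smooth relatively minimal model $S\to\mathbb{P}^1$ and cite \cite[Corollary~8]{KM17}, whereas the paper carries out an explicit case-by-case analysis for each value of $\delta_{\mathcal{Q}_{n-3}^1}\in\{0,\dots,7\}$ and each admissible multidegree, exhibiting a concrete birational map from $\mathcal{Q}_{n-3}^1$ to a quadric surface, a cubic surface with controlled singularities (invoking \cite[Theorem~1.2]{Kol02}), a del Pezzo of degree four, or a $(1,2)$-divisor in $\mathbb{P}^1\times\mathbb{P}^2$, as appropriate. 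Indeed Remark~\ref{sing_c} states explicitly that the sole purpose of the lemma is the singular case, the smooth case being already covered by the references you cite.

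Your approach has a genuine gap at the resolution step. You assert that the smooth minimal model $S$ satisfies $(-K_S)^2\geq 8-\delta$, but this is not justified and is not automatic: resolving singularities gives $K_S=f^*K_{\mathcal{Q}_{n-3}^1}+\sum a_iE_i$ and hence $(-K_S)^2\leq(-K_{\mathcal{Q}_{n-3}^1})^2$ in the canonical case, and the subsequent relative MMP over $\mathbb{P}^1$ could a priori land you at a minimal conic bundle with $(-K)^2\leq 0$, i.e.\ $\delta_{\min}\geq 8$, putting you outside the range of \cite{KM17}. Controlling this requires knowing something about the singularities of $\mathcal{Q}_{n-3}^1$, which is exactly where the hypotheses on $\sigma$ and $\rho$ enter --- but you have not used them for this purpose. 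Separately, in case~(ii) your argument that odd $\delta$ forces a $k$-point on $S$ is not valid: the discriminant divisor of odd degree in $\mathbb{P}^1$ need not contain a $k$-point (take an irreducible cubic without a rational root over $\mathbb{Q}$), and your appeal to \cite[Corollary~8]{KM17} here is circular since that result \emph{assumes} a point. The paper sidesteps both issues by never passing to a smooth model: for each odd $\delta$ it shows directly that $\mathcal{Q}_{n-3}^1$ (or a blow-down of it) is rational, and for each even $\delta$ it uses the given smooth point explicitly in the construction of the birational map.
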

\begin{proof}
Assume that $\sigma$ splits as the product of two polynomials both depending on $y_{n-2},y_{n-1},y_{n}$. Then $\mathcal{Q}^1_{n-3}$ splits as a union of surfaces $S_1,S_2$ such that $S_1\cap S_1$ is a rational section of $\pi:\mathcal{Q}^{n-1}\rightarrow\mathbb{P}^1$. Now, assume that $\sigma$ does not split as the product of two polynomials both depending on $y_{n-2},y_{n-1},y_{n}$. 

If $\delta_{\mathcal{Q}_{n-3}^{1}} = 0$ then $\mathcal{Q}_{n-3}^{1} = \mathbb{P}^1 \times C$ where $C$ is smooth conic. Consider the projection $\pi_2:\mathcal{Q}_{n-3}^{1}\rightarrow C$. If $p\in \mathcal{Q}_{n-3}^{1}$ is a point then $\mathbb{P}^1\times \pi_2(p)$ is a section of $\mathcal{Q}_{n-3}^{1}\rightarrow\mathbb{P}^1$ and hence $\mathcal{Q}_{n-3}^{1}$ is rational. If $\delta_{\mathcal{Q}_{n-3}^{1}} = 1$ then one of the $\sigma_{i,i}$ must be zero and by Remark \ref{not0} $\mathcal{Q}^{n-1}$ is rational.  

If $\delta_{\mathcal{Q}_{n-3}^{1}} = 2$ then $d_{n-2,n-2} = 2$, $d_{n-1,n-1} = d_{n,n} = 0$. If the point of $\mathcal{Q}_{n-3}^{1}$ lies on $\{y_{n-2} = 0\}$ then $\mathcal{Q}_{n-3}^{1}\cap\{y_{n-2} = 0\}$ is rational. Assume that the point of $\mathcal{Q}_{n-3}^{1}$ does not lie on $\{y_{n-2} = 0\}$. Note that there is a blow-down morphism $\mathcal{Q}_{n-3}^{1}\rightarrow Q\subset\mathbb{P}^{3}$ onto a quadric surface contracting $\mathcal{Q}_{n-3}^{1}\cap\{y_{n-2} = 0\}$. If $\mathcal{Q}_{n-3}^{1}$ has a smooth point then $Q$ also has a smooth point and hence it is rational. 

If $\delta_{\mathcal{Q}_{n-3}^{1}} = 3$, keeping in mind Remark \ref{not0}, we must have $d_{n-2,n-2} = d_{n-1,n-1} = d_{n,n} = 1$. Then $\mathcal{Q}_{n-3}^{1}$ is a surface of bidegree $(1,2)$ in $\mathbb{P}^1\times\mathbb{P}^2$ and hence it is rational by Remark \ref{12-22}.

If $\delta_{\mathcal{Q}_{n-3}^{1}} = 4$ we have the following two possibilities:
$$
(d_{n-2,n-2},d_{n-2,n-1},d_{n-2,n},d_{n-1,n-1},d_{n-1,n},d_{n,n}) \in\{(4,2,2,0,0,0),(2,2,1,2,1,0)\}.
$$
If the point of $\mathcal{Q}_{n-3}^{1}$ lies on $\{y_{n-2} = 0\}$ then $\mathcal{Q}_{n-3}^{1}\cap\{y_{n-2} = 0\}$ is rational. Otherwise we proceed as follows: consider the case 
$$
(d_{n-2,n-2},d_{n-2,n-1},d_{n-2,n},d_{n-1,n-1},d_{n-1,n},d_{n,n}) = (4,2,2,0,0,0),\: (a_{n-2},a_{n-1},a_n) = (2,0,0).
$$
The divisor $\overline{H}_2$ induces the morphism 
$$
\begin{array}{ccc}
\mathcal{Q}_{n-3}^{1} & \longrightarrow & S\subset\mathbb{P}^4_{(\xi_0,\dots,\xi_4)}\\
([x_0:x_1],[y_0:y_1:y_2]) & \mapsto & [x_0^2y_0:x_0x_1y_0:x_1^2y_0,y_1,y_2]
\end{array}
$$
contracting $\mathcal{Q}_{n-3}^{1}\cap\{y_{n-2} = 0\}$ where 
$$S = \{\xi_1^2-\xi_0\xi_2 = P(\xi_0,\dots,\xi_4) = 0\}$$ 
with $P\in k[\xi_0,\dots,\xi_4]_2$. Set $L_v = \{\xi_0 = \xi_1 = \xi_2 = 0\}$. The smooth point of $\mathcal{Q}_{n-3}^{1}$ yields a smooth point $q\in S$. Up to a change of variables we may assume that $q = [1:0:0:0:0]$. The projection of $S$ from $q$ is a cubic surface $S'\subset\mathbb{P}^3_{(z_0,\dots,z_3)}$ containing a line $L$ given by the projection of $L_v$ and singular at a $0$-dimensional cycle of length two supported on $L$ given by the projection of $L_v\cap\{P=0\}$. 

Assume that $S'$ has a triple point $p\in S'$. Up to a change of variables we may assume that $p = [0:0:0:1]$. Then the equations of $S'$ does not depend on $z_3$ and hence the equation of $\mathcal{Q}_{n-3}^{1}\subset\mathcal{T}_{2,0,0}$ does not depend on $y_2$. In particular, $\rho$ is identically zero. If $n =2$ this contradicts the hypotheses. Therefore $S'$ does not have a triple point and hence \cite[Theorem 1.2]{Kol02} yields that $S'$ is unirational. If $n \geq 3$ the singular locus of $\mathcal{Q}_{n-3}^{1}$ yields a section of $\pi:\mathcal{Q}^{n-1}\rightarrow\mathbb{P}^1$ and hence $\mathcal{Q}^{n-1}$ is rational.  

The remaining cases can be worked out using similar arguments. Finally, to get the unirationality of $\mathcal{Q}^{n-1}$ it is enough to apply Proposition \ref{Enr}.
\end{proof}

\begin{Remark}\label{smoothpt}
The existence of a smooth point in Lemma \ref{L_1-7} is necessary. For instance, consider the conic bundle
$$
S = \{x_0^2y_0^2 + x_0^2y_1^2 + x_1^2y_2^2 = 0\}\subset\mathbb{P}^1_{(x_0,x_1)}\times \mathbb{P}^2_{(y_0,y_1,y_2)}
$$
over $k = \mathbb{Q}$. Note that $S$ is singular at $([0:1],[0:0:1])$ and along $\{x_0 = y_2 = 0\}$, and that $S$ is not unirational. Indeed, if it were the set of rational points of $S$ would be dense in $S$. However, setting $x_0 = y_2 = 1$ we get the conic fibration $\{y_0^2+y_1^2 + x_1^2 = 0\}\subset\mathbb{A}^3_{(x_1,y_0,y_1)}$ and the conic $C_{t} = \{y_0^2+y_1^2 + t^2 = 0\}$ does not have points for all $t\neq 0$. 
\end{Remark}

\begin{Proposition}\label{p1uni}
Let $\pi:\mathcal{Q}^{n-1}\rightarrow\mathbb{P}^1$ be a quadric bundle of multidegree $(d_{0,0},\dots,d_{n,n})$. Assume that $\sigma$ does not have a factor depending just on $x_0,x_1$ and that if $n=2$ then $\rho$ is not identically zero. If
\begin{itemize}
\item[(i)] either $d_{n-2,n-2} + d_{n-1,n-1} + d_{n,n} \in \{1,3,5,7\}$; or 
\item[(ii)] $d_{n-2,n-2} + d_{n-1,n-1} + d_{n,n}\in\{0,2,4,6\}$ and $\mathcal{Q}^1_{n-3}$ has a smooth point;  
\end{itemize}
then $\mathcal{Q}^{n-1}$ is unirational. 
\end{Proposition}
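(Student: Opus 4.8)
The plan is to reduce the statement directly to Lemma \ref{L_1-7}. Recall from the set-up of this section that $\mathcal{Q}^1_{n-3}=\mathcal{Q}^{n-1}\cap\{y_0=\dots=y_{n-3}=0\}$ is a conic bundle over $\mathbb{P}^1$, cut out inside $\mathcal{T}_{a_{n-2},a_{n-1},a_n}$ by the polynomial $\sigma=\sigma(x_0,x_1,y_{n-2},y_{n-1},y_n)$, and that its discriminant has degree
$$
\delta_{\mathcal{Q}^1_{n-3}}=d_{n-2,n-2}+d_{n-1,n-1}+d_{n,n}.
$$
Moreover, as observed earlier in this section, Proposition \ref{Enr} shows that if $\mathcal{Q}^1_{n-3}$ is unirational then so is $\mathcal{Q}^{n-1}$, since $\mathcal{Q}^1_{n-3}\subset\mathcal{Q}^{n-1}$ dominates $\mathbb{P}^1$; and this reduction is in any case already incorporated into the conclusion of Lemma \ref{L_1-7}, so I would invoke that lemma in the form in which it is stated.

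The remaining task would be to match the hypotheses. The assumption that $\sigma$ has no factor depending only on $x_0,x_1$ is precisely the second of the two alternatives allowed in the hypothesis of Lemma \ref{L_1-7}, so the disjunction appearing there is satisfied; the requirement that the discriminant polynomial $\rho$ of $\mathcal{Q}^1_{n-3}\to\mathbb{P}^1$ be not identically zero when $n=2$ is assumed verbatim in both statements. Finally, in view of the displayed expression for $\delta_{\mathcal{Q}^1_{n-3}}$, condition (i) of the Proposition is exactly condition (ii) of Lemma \ref{L_1-7} and condition (ii) of the Proposition is exactly condition (i) of Lemma \ref{L_1-7}. Hence Lemma \ref{L_1-7} applies verbatim and yields the unirationality of $\mathcal{Q}^{n-1}$.

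I do not expect a genuine obstacle at this level: all of the substantive work has already been absorbed into Lemma \ref{L_1-7}, whose proof treats the finitely many possible multidegrees with $\delta_{\mathcal{Q}^1_{n-3}}\leq 7$ one at a time. The delicate point --- and the reason the hypotheses on $\sigma$ and on $\rho$ cannot be dropped --- lies entirely inside that lemma, in the even-discriminant subcase: there one projects a suitable, possibly singular, model of $\mathcal{Q}^1_{n-3}$ from a smooth point to a cubic surface and invokes Koll\'ar's unirationality criterion \cite[Theorem 1.2]{Kol02}, which requires both a smooth point (hence hypothesis (ii) above) and the absence of a triple point on that cubic surface; a triple point is excluded by the condition $\rho\not\equiv 0$ when $n=2$, and for $n\geq 3$ it would force a section of $\pi$ and hence the rationality of $\mathcal{Q}^{n-1}$. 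So the proof I propose amounts to a direct application of Lemma \ref{L_1-7} once this dictionary between the two sets of hypotheses has been made explicit.
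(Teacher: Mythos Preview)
Your proposal is correct and is exactly the paper's approach: the paper's proof is the single sentence ``Since $\delta_{\mathcal{Q}_{n-3}^1} = d_{n-2,n-2} + d_{n-1,n-1} + d_{n,n}\leq 7$ the claim follows from Lemma \ref{L_1-7}.'' Your version simply spells out the dictionary between the two sets of hypotheses and adds some commentary on why the side conditions on $\sigma$ and $\rho$ matter inside that lemma.
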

\begin{proof}
Since $\delta_{\mathcal{Q}_{n-3}^1} = d_{n-2,n-2} + d_{n-1,n-1} + d_{n,n}\leq 7$ the claim follows from Lemma \ref{L_1-7}.
\end{proof}

\begin{Lemma}\label{sm_h=2}
Let $\pi:\mathcal{Q}^{2}\rightarrow\mathbb{P}^1$ be a smooth quadric bundle. Then $\sigma$ does not have a factor depending just on $x_0,x_1$.
\end{Lemma}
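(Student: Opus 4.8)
\emph{Strategy.} My plan is to prove the contrapositive: if $\sigma$ has a non-constant factor lying in $k[x_0,x_1]$, then $\mathcal{Q}^2$ is singular over $\overline{k}$, hence not $k$-smooth. Recall $\mathcal{Q}^2=\mathcal{Q}^{n-1}$ ($n=3$) is cut out in $\mathcal{T}_{a_0,a_1,a_2,a_3}$ by $F=\sum_{0\le i\le j\le 3}\sigma_{i,j}(x_0,x_1)y_iy_j=0$ as in (\ref{Cox_g}), with $\sigma=F|_{y_0=0}=\sum_{1\le i\le j\le 3}\sigma_{i,j}(x_0,x_1)y_iy_j$; by Remark \ref{not0} we have $\sigma\ne 0$ (it contains $\sigma_{1,1}y_1^2$), so its factorization is meaningful. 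So suppose $\sigma=g\,\sigma'$ with $g\in k[x_0,x_1]$ of degree $\ge 1$ and pick a zero $b_0\in\mathbb{P}^1(\overline{k})$ of $g$.

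\emph{Step 1: the fibre over $b_0$ is very singular.} The key observation is that $\sigma(b_0,y)=g(b_0)\,\sigma'(b_0,y)=0$, so the quadratic form cutting out the fibre $Q_{b_0}:=\pi^{-1}(b_0)\subset\mathbb{P}^3$ vanishes identically on the hyperplane $\{y_0=0\}$; hence it equals $y_0\ell$ for some linear form $\ell$ (possibly $0$). Therefore this form has rank $\le 2$, and the singular locus $V_{b_0}\subset\mathbb{P}^3$ of $Q_{b_0}$ is a linear subspace of dimension $\ge 3-2=1$ (explicitly $\{y_0=\ell=0\}$, or $\{y_0=0\}$ when $\ell$ is a scalar multiple of $y_0$, or all of $\mathbb{P}^3$ when $\ell=0$, the latter meaning $\mathcal{Q}^2$ contains the whole fibre and is reducible).

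\emph{Step 2: a fibre with positive-dimensional vertex forces $\mathcal{Q}^2$ to be singular.} I would trivialize $\mathbb{P}(\mathcal{E})$ over an affine neighbourhood $U\ni b_0$ as $U\times\mathbb{P}^3$, with a local parameter $t$ vanishing at $b_0$; under this trivialization $\{y_0=0\}$ becomes a coordinate hyperplane, $\mathcal{Q}^2$ is locally $\{\widetilde F=0\}$ with $\widetilde F=\sum_{0\le i\le j\le 3}s_{i,j}(t)\,\tilde y_i\tilde y_j$ a unit multiple of $F$, and $\widetilde F|_{t=0}$ is the form defining $Q_{b_0}$. For every $p\in V_{b_0}$ the point $(b_0,p)$ lies on $\mathcal{Q}^2$ and all $\mathbb{P}^3$-directional partials of $\widetilde F$ vanish there, so $\mathcal{Q}^2$ is singular at $(b_0,p)$ unless $\partial_t\widetilde F|_{(b_0,p)}=\sum_{0\le i\le j\le 3}s'_{i,j}(0)\,\tilde y_i^{(p)}\tilde y_j^{(p)}\ne 0$. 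But $p\mapsto\partial_t\widetilde F|_{(b_0,p)}$ is the restriction to $\mathbb{P}^3$ of a single quadratic form $R$, whose zero locus over $\overline{k}$ is all of $\mathbb{P}^3$ (if $R\equiv 0$) or a quadric hypersurface — and a quadric hypersurface meets every line. Since $\dim V_{b_0}\ge 1$, I can choose $p\in V_{b_0}$ with $R(p)=0$; then every partial of $\widetilde F$ vanishes at $(b_0,p)$, so $\mathcal{Q}^2_{\overline{k}}$ is singular, contradicting the hypothesis. This proves the lemma.

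\emph{Main obstacle.} There is no substantial difficulty here; the only points requiring attention are keeping Step 2 uniform over the degenerate fibres of Step 1 (double plane, or $g\mid F$ with $\mathcal{Q}^2$ reducible), which the formulation "$\dim V_{b_0}\ge 1$ together with: a quadric meets every line over $\overline{k}$" handles automatically, and the routine bookkeeping in the trivialization $U\times\mathbb{P}^3$, under which $\sigma$ only picks up an invertible factor and $\{y_0=0\}$ remains a coordinate hyperplane.
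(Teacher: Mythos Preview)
Your proof is correct and follows essentially the same idea as the paper's. The paper argues directly in Cox coordinates: writing $F=y_0\sum_{i}\sigma_{0,i}y_i+\alpha(x_0,x_1)\,\sigma'$, it observes that the locus $\{y_0=\alpha=\sum_{i\ge 1}\sigma_{0,i}y_i=\sigma'=0\}$ lies in $\Sing(\mathcal{Q}^2)$ and is nonempty over $\overline{k}$ (a linear form and a conic in the $\mathbb{P}^2=\{y_0=0\}$ over a root of $\alpha$ always meet), which is precisely your Steps~1--2 unwound algebraically; your $V_{b_0}$ is $\{y_0=\ell=0\}$ with $\ell=\sum_{i\ge 1}\sigma_{0,i}(b_0)y_i$, and your quadric $R$ restricted to $\{y_0=0\}$ is $\alpha'(b_0)\,\sigma'(b_0,\cdot)$.
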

\begin{proof}
If $\sigma$ has a factor depending just on $x_0,x_1$ we may write
$$
\mathcal{Q}^{2} = \left\lbrace y_0\sum_{i=0}^{3}\sigma_{0,i}y_i +\alpha(x_0,x_1)\sum_{1\leq i \leq j \leq 3}\sigma_{i,j}y_iy_j = 0\right\rbrace\subset\mathcal{T}_{a_0,\dots,a_3}.
$$
Hence, $\{y_0 = \alpha(x_0,x_1) = \sum_{i=1}^{3}\sigma_{0,i}y_i = \sum_{1\leq i \leq j \leq 3}\sigma_{i,j}y_iy_j = 0\} \subseteq \Sing(\mathcal{Q}^{2})$
and so $\mathcal{Q}^{2}$ would be singular. 
\end{proof}

\begin{thm}\label{main1}
Let $\pi:\mathcal{Q}^{n-1}\rightarrow\mathbb{P}^1$ be a quadric bundle. Assume that $(-K_{\mathcal{Q}^{n-1}})^n > 0$ and $\delta_{\mathcal{Q}^{n-1}}$ is odd. If either
\begin{itemize}
\item[(i)] $n\leq 5$, $\mathcal{Q}^{n-1}$ has a point and is otherwise general; or
\item[(ii)] $\delta_{\mathcal{Q}^{n-1}}\leq 3n+1$ and $\mathcal{Q}^{n-1}$ is general;
\end{itemize}
then $\mathcal{Q}^{n-1}$ is unirational. Furthermore, if $n \leq 3$ the above statements hold for any smooth quadric bundle. 
\end{thm}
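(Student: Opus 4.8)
The plan is to reduce, via Remark~\ref{not0} and a parity count, to the case $n$ even with all $d_{i,i}$ odd, then to restrict $\mathcal{Q}^{n-1}$ to the conic bundle $\mathcal{Q}^1_{n-3}=\mathcal{Q}^{n-1}\cap\{y_0=\dots=y_{n-3}=0\}$ (for $n=2$ this is $\mathcal{Q}^{n-1}$ itself) and apply Proposition~\ref{p1uni}, treating the one borderline multidegree separately by Proposition~\ref{333P1}.

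First I would normalise: by Remark~\ref{not0} I may assume $\sigma_{i,i}\neq 0$ and $d_{i,i}\geq 0$ for all $i$, since otherwise $\mathcal{Q}^{n-1}$ is already rational. By~(\ref{compdeg}), $d_{i,i}=c+2a_i$ for a common integer $c$, so all the $d_{i,i}$ have the parity of $c$ and $\delta_{\mathcal{Q}^{n-1}}=\sum_{i=0}^n d_{i,i}\equiv(n+1)c\pmod 2$; hence $\delta_{\mathcal{Q}^{n-1}}$ odd forces $n$ even and every $d_{i,i}$ odd, in particular $d_{i,i}\geq 1$ (if $n$ is odd the hypotheses are vacuous). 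By Remark~\ref{topsiR}, $(-K_{\mathcal{Q}^{n-1}})^n>0$ reads $\delta_{\mathcal{Q}^{n-1}}\leq 4n-1$.

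Now set $\tau:=d_{n-2,n-2}+d_{n-1,n-1}+d_{n,n}=\delta_{\mathcal{Q}^1_{n-3}}$, which is odd and $\geq 3$. If $\tau\geq 9$ then $d_{n-2,n-2}\geq\tau/3\geq 3$, so $d_{0,0},\dots,d_{n-3,n-3}\geq 3$ and $\delta_{\mathcal{Q}^{n-1}}\geq 3(n-2)+\tau\geq 3n+3$: under (ii) this contradicts $\delta_{\mathcal{Q}^{n-1}}\leq 3n+1$, while under (i) it forces $3n+3\leq 4n-1$, so $n\geq 4$, whence $n=4$ and $\delta_{\mathcal{Q}^3}=15$, and then $d_{0,0}+d_{1,1}\geq 2d_{2,2}\geq 6=15-\tau$ must be a chain of equalities, giving $(d_{0,0},\dots,d_{4,4})=(3,3,3,3,3)$. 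So in every case either $\tau\leq 7$ (hence $\tau\in\{3,5,7\}$), or $n=4$ with multidegree $(3,3,3,3,3)$ and $\mathcal{Q}^3$ has a point. In the first case I would invoke Proposition~\ref{p1uni}(i), once its two running hypotheses are verified for $\mathcal{Q}^1_{n-3}$: that the defining polynomial $\sigma$ has no factor depending only on $x_0,x_1$, and, when $n=2$, that $\rho\not\equiv 0$. The second is automatic since the quadratic form is generically non-degenerate. The first holds for a general $\mathcal{Q}^{n-1}$ because then the coefficients $\sigma_{i,j}\in k[x_0,x_1]_{d_{i,j}}$ ($n-2\leq i\leq j\leq n$) of $\sigma$ are general, hence have no common factor --- a nonempty open condition inherited by $\mathcal{Q}^1_{n-3}$ from a general member, or a general member through a fixed point (one linear condition), of $V^1_{d_{0,0},\dots,d_{n,n}}$; and when $\mathcal{Q}^{n-1}$ is only assumed smooth with $n\leq 3$, only $n=2$ is not vacuous, where such a factor $\alpha(x_0,x_1)$ of $\sigma$ would make $\{\alpha=0\}$ a union of whole $\mathbb{P}^2$-fibres in $\mathcal{Q}^1$ meeting $\{\sigma/\alpha=0\}$, so $\mathcal{Q}^1$ would be reducible and singular, as in Lemma~\ref{sm_h=2}.

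In the remaining case, $(d_{0,0},\dots,d_{4,4})=(3,3,3,3,3)$ forces all $a_i$ equal, so $\mathbb{P}(\mathcal{E})\cong\mathbb{P}^1\times\mathbb{P}^4$ and, by~(\ref{cls}), $\mathcal{Q}^3\sim 3H_1+2H_2$; that is, $\mathcal{Q}^3\subset\mathbb{P}^1\times\mathbb{P}^4$ is a divisor of bidegree $(3,2)$ with a point and otherwise general, and since $n=4\geq 4$ Proposition~\ref{333P1} yields its unirationality. I expect the main difficulty to be twofold: making sure the genericity of $\mathcal{Q}^{n-1}$ really descends to $\mathcal{Q}^1_{n-3}$, so that Proposition~\ref{p1uni} applies; and --- the more substantive point --- isolating $(3,\dots,3)$ as the unique multidegree in the permitted range for which the reduction to a conic bundle breaks down, which is exactly the configuration handled by the separate and more delicate Proposition~\ref{333P1}, via del Pezzo surfaces of degree four inside the small modification $\mathcal{Q}^{n-1}_+$.
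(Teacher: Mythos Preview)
Your proof is correct and follows the same route as the paper: restrict to the conic bundle $\mathcal{Q}^1_{n-3}$, bound $\tau=\delta_{\mathcal{Q}^1_{n-3}}\leq 7$, apply Proposition~\ref{p1uni}, and handle the lone exceptional multidegree $(3,\dots,3)$ via Proposition~\ref{333P1}. Your observation that $\delta_{\mathcal{Q}^{n-1}}$ odd forces all $d_{i,i}$ odd \emph{and} $n$ even is a genuine simplification the paper does not make --- it renders the cases $n=3,5$ vacuous (the paper unnecessarily considers $\mathbb{P}^1\times\mathbb{P}^5$ in case~(i) and invokes Lemma~\ref{sm_h=2} for $n=3$ in the smooth clause) --- and your direct inequality $\delta_{\mathcal{Q}^{n-1}}\geq 3(n-2)+\tau$ is tidier than the paper's inductive argument for~(ii). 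For the smooth $n=2$ clause the paper simply cites \cite[Corollary~8]{KM17} rather than verifying the running hypotheses of Proposition~\ref{p1uni} directly as you do; both are valid.
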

\begin{proof}
By Proposition \ref{topsi} we have that $(-K_{\mathcal{Q}^{n-1}})^n > 0$ if and only if $\delta_{\mathcal{Q}^{n-1}}\leq 4n-1$. If $n = 2$ then $\delta_{\mathcal{Q}^{n-1}}\leq 7$ and we conclude by Proposition \ref{p1uni}. Assume that $n\geq 3$. 

If $d_{n-2,n-2}+d_{n-1,n-1}+d_{n,n} > 7$ then $d_{n-2,n-2} \geq 4$ unless $(d_{n-2,n-2},d_{n-1,n-1},d_{n,n}) = (3,3,3)$. If $d_{n-2,n-2} = 4$ then $d_{n-1,n-1}\geq 2$ and $d_{n,n}\geq 2$. So $\delta_{\mathcal{Q}^{n-1}}\geq 4(n-1)+2+2 = 4n > 4n-1$, a contradiction. If $d_{n-2,n-2} \geq 5$ then $d_{n-1,n-1}\geq 1$ and $d_{n,n}\geq 1$. So $
\delta_{\mathcal{Q}^{n-1}}\geq 5(n-1)+1+1 = 5n-3 > 4n-1$ for $n > 2$, a contradiction. 

Since $\mathcal{Q}^{n-1}$ is general we may assume that $\mathcal{Q}^1_{n-3}$ satisfies the hypotheses of Proposition \ref{p1uni}. Moreover, since $\delta_{\mathcal{Q}^{n-1}}$ is odd all the $d_{i,i}$ are odd and hence $d_{n-2,n-2}+d_{n-1,n-1}+d_{n,n}$ is also odd. Then $d_{n-2,n-2}+d_{n-1,n-1}+d_{n,n}\in\{1,3,5,7\}$ unless $(d_{n-2,n-2},d_{n-1,n-1},d_{n,n}) = (3,3,3)$ and by Proposition \ref{p1uni} $\mathcal{Q}^{n-1}$ is unirational.  

Now, consider the case $(d_{n-2,n-2},d_{n-1,n-1},d_{n,n})= (3,3,3)$. Since $n\leq 5$ the quadric bundle $\mathcal{Q}^{n-1}$ is a divisor of bidegree $(3,2)$ either in $\mathbb{P}^1\times\mathbb{P}^4$ or in $\mathbb{P}^1\times\mathbb{P}^5$ and since by hypothesis it has a point we conclude by applying Proposition \ref{333P1}.  

If $\delta_{\mathcal{Q}^{n-1}}\leq 3n+1$ we will show that the case $(d_{n-2,n-2},d_{n-1,n-1},d_{n,n})= (3,3,3)$ can be ruled out and so we will not need the existence of a point anymore. 

First, note that $d_{0,0}+ \dots + d_{n,n}\leq 3n+1$ implies $d_{n-2,n-2}+d_{n-1,n-2}+d_{n,n}\leq 7$. For $n = 2$ the claim is trivial. We proceed by induction on $n\geq 2$. If $d_{0,0} = 2$ then $d_{i,i}\leq 2$ for all $i = 0,\dots,n$ and then $d_{n-2,n-2}+d_{n-1,n-2}+d_{n,n}\leq 6$. Similarly $d_{0,0} = 1$ yields $d_{n-2,n-2}+d_{n-1,n-2}+d_{n,n}\leq 3$, and $d_{0,0} = 0$ implies $d_{n-2,n-2}+d_{n-1,n-2}+d_{n,n} = 0$. So we may assume that $d_{0,0}\geq 3$.

Now, $d_{0,0}+ \dots + d_{n,n}\leq 3n+1$ and $d_{0,0}\geq 3$ yield that $d_{1,1}+\dots+d_{n,n}\leq 3n+1-d_{0,0}\leq 3n+1-3 = 3(n-1)+1$ and by induction we get that $d_{n-2,n-2}+d_{n-1,n-2}+d_{n,n}\leq 7$. Since $\delta_{\mathcal{Q}^{n-1}}$ is odd to conclude it is enough to apply Proposition \ref{p1uni}. Finally, if $n \leq 3$ it is enough to apply Lemma \ref{sm_h=2} when $n = 3$ and \cite[Corollary 8]{KM17} for $n = 2$.
\end{proof}

\begin{Corollary}\label{potD}
Let $\pi:\mathcal{Q}^{n-1}\rightarrow\mathbb{P}^1$ be a general quadric bundle over an infinite field $k$. If $(-K_{\mathcal{Q}^{n-1}})^n > 0$ then there exists a quadratic extension $k'$ of $k$ such that $\mathcal{Q}^{n-1}$ is unirational over $k'$. Furthermore, if $n \leq 3$ the above statement holds for any smooth quadric bundle. 
\end{Corollary}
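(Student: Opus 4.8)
The plan is to run the argument in the proof of Theorem \ref{main1}, the only new ingredient being that when the relevant conic or quadric bundle fails to have a $k$-rational point one passes to a quadratic extension over which a smooth fibre acquires one. By Proposition \ref{topsi} (see also Remark \ref{topsiR}) the hypothesis $(-K_{\mathcal{Q}^{n-1}})^n>0$ is equivalent to $\delta_{\mathcal{Q}^{n-1}}\le 4n-1$; moreover, for a general quadric bundle $\sigma$ has no factor depending only on $x_0,x_1$ and, when $n=2$, $\rho\not\equiv 0$, while for a smooth quadric bundle with $n\le 3$ the first of these is Lemma \ref{sm_h=2}. Arguing as in the proof of Theorem \ref{main1}, from $\delta_{\mathcal{Q}^{n-1}}\le 4n-1$ and the fact that the nonzero $d_{i,i}$ all have the same parity one deduces that either $\delta_{\mathcal{Q}^1_{n-3}}=d_{n-2,n-2}+d_{n-1,n-1}+d_{n,n}\le 7$, or $(d_{n-2,n-2},d_{n-1,n-1},d_{n,n})=(3,3,3)$, and in the latter case necessarily $n\ge 4$.

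Assume first $\delta_{\mathcal{Q}^1_{n-3}}\le 7$. If $\delta_{\mathcal{Q}^1_{n-3}}$ is odd, then $\delta_{\mathcal{Q}^1_{n-3}}\in\{1,3,5,7\}$ and Proposition \ref{p1uni}(i) already shows that $\mathcal{Q}^{n-1}$ is unirational over $k$. If $\delta_{\mathcal{Q}^1_{n-3}}$ is even, then all $d_{i,i}$ are even and $\delta_{\mathcal{Q}^1_{n-3}}\in\{0,2,4,6\}$; since $k$ is infinite I would pick $p\in\mathbb{P}^1(k)$ outside the finite discriminant of $\mathcal{Q}^1_{n-3}\to\mathbb{P}^1$, so that the fibre over $p$ is a smooth conic $C$ over $k$, and a quadratic extension $k'$ of $k$ over which a general $k'$-line meets $C$ in a $k'$-point. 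As a point lying on a smooth fibre is a smooth point of the total space, over $k'$ the bundle $\mathcal{Q}^1_{n-3}$ has a smooth point, and Proposition \ref{p1uni}(ii) gives that $\mathcal{Q}^{n-1}$ is unirational over $k'$.

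It remains to treat the case $(d_{n-2,n-2},d_{n-1,n-1},d_{n,n})=(3,3,3)$, which is precisely the case that forced the hypothesis $n\le 5$ in Theorem \ref{main1}(i). Here all $d_{i,i}$ are odd, so every $d_{i,i}$ different from $3$ is at least $5$; letting $t$ denote the number of indices $i$ with $d_{i,i}=3$, we get $\delta_{\mathcal{Q}^{n-1}}\ge 3t+5(n+1-t)$, and $\delta_{\mathcal{Q}^{n-1}}\le 4n-1$ forces $t\ge (n+6)/2\ge 5$. The $t$ indices $i$ with $d_{i,i}=3$ are the largest ones, for which the $a_i$ are then all equal, so $Z:=\mathcal{Q}^{n-1}\cap\{y_0=\dots=y_{n-t}=0\}$ is a divisor of bidegree $(3,2)$ in $\mathbb{P}^1\times\mathbb{P}^{t-1}$ with $t-1\ge 4$, and it is general since $\mathcal{Q}^{n-1}$ is. Passing to a quadratic extension $k'$ of $k$ over which a smooth fibre of $Z\to\mathbb{P}^1$ has a $k'$-point, Proposition \ref{333P1} (applicable as $t-1\ge 4$) shows that $Z$ is unirational over $k'$, and since $\pi_{|Z}:Z\to\mathbb{P}^1$ is dominant, Proposition \ref{Enr} yields that $\mathcal{Q}^{n-1}$ is unirational over $k'$. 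Finally, if $\mathcal{Q}^{n-1}$ is merely smooth and $n=2$, it is a smooth conic bundle with $(-K_{\mathcal{Q}^1})^2>0$, i.e. $\delta_{\mathcal{Q}^1}\le 7$, and over the quadratic extension furnished by a smooth fibre it has a $k'$-point, hence is unirational over $k'$ by \cite[Corollary 8]{KM17}; if it is smooth and $n=3$ one invokes Lemma \ref{sm_h=2} and argues as in the previous two paragraphs, the case $(3,3,3)$ being vacuous for $n=3$.

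The step I expect to be the main obstacle is the case $(3,3,3)$: one has to extract from $\delta_{\mathcal{Q}^{n-1}}\le 4n-1$ the sharp inequality $t-1\ge 4$ that places $Z$ inside $\mathbb{P}^1\times\mathbb{P}^N$ with $N\ge 4$, and one has to ensure that the $k'$-point produced on a smooth fibre of $Z$ is general enough for Proposition \ref{333P1}, i.e. that the associated complete intersection of two quadrics is smooth. The latter holds for a general $Z$ and a general such $k'$-point, since smoothness is an open condition and both $k$ and $k'$ are infinite.
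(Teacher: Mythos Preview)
Your proof is correct and follows essentially the same route as the paper's: reduce to the dichotomy $\delta_{\mathcal{Q}^1_{n-3}}\le 7$ or $(d_{n-2,n-2},d_{n-1,n-1},d_{n,n})=(3,3,3)$, produce a point over a quadratic extension, and invoke Propositions~\ref{p1uni} and~\ref{333P1}. The only noteworthy difference is in how the extension $k'$ is built. The paper does this once and for all: setting $x_0=y_0=\dots=y_{n-2}=0$, $x_1=1$ in (\ref{Cox_g}) yields a binary quadratic form $f(y_{n-1},y_n)$, and $k'$ is taken to be its splitting field; the resulting $k'$-point then lies on every sub-bundle $\mathcal{Q}^{j}_{n-j-2}$ simultaneously, so both the $\le 7$ case and the $(3,3,3)$ case (via $\mathcal{Q}^3_{n-5}\subset\mathbb{P}^1\times\mathbb{P}^4$, as in Corollary~\ref{realcf}) are covered by a single choice of $k'$. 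Your case-by-case construction via a smooth fibre, together with the explicit count $t\ge 5$ of indices with $d_{i,i}=3$, achieves the same end with a bit more bookkeeping but also makes the $(3,3,3)$ step, which the paper leaves implicit, fully transparent.
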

\begin{proof}
In (\ref{Cox_g}) set $x_0 = y_0 = \dots = y_{n-2} = 0$, $x_1 = 1$. Then we get a homogeneous polynomial $f(y_{n-1},y_n)$ of degree two with coefficients in $k$. Set $\overline{f}(y_n) = f(1,y_n)$ and let $k'$ be the splitting field of $\overline{f}$ over $k$. Then $\mathcal{Q}^{n-1}$ has a point over $k'$. Arguing as in the proof of Theorem \ref{main1} we have that $d_{n-2,n-2}+d_{n-1,n-1}+d_{n,n}\leq 7$ with the only exception $(d_{n-2,n-2},d_{n-1,n-1},d_{n,n}) = (3,3,3)$. Hence, to conclude it is enough to apply Propositions \ref{333P1} and \ref{p1uni}.
\end{proof}

\begin{Corollary}\label{corEn}
Let $\pi:\mathcal{Q}^h\rightarrow\mathbb{P}^{n-h}$ be a general quadric bundle with discriminant of odd degree $\delta_{\mathcal{Q}^h}$. If either
\begin{itemize}
\item[(i)] $\delta_{\mathcal{Q}^h}\leq 3h+4$; or
\item[(ii)] $\delta_{\mathcal{Q}^h}\leq 4h+3$, $h\leq 4$, $k$ is $C_r$ and $h+2 > 2^{r+n-h-1}$;
\end{itemize}
then $\mathcal{Q}^h$ is unirational. Furthermore, if $h \leq 2$ the above statements hold for any smooth quadric bundle. 
\end{Corollary}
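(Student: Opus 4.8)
The plan is to deduce the statement from the one‑dimensional base case of Theorem \ref{main1} via the reduction of Section \ref{Pn--->P1}. If $n-h=1$ there is nothing to prove, so assume $n-h\geq 2$ and pick a general $k$‑rational point $p\in\mathbb{P}^{n-h}$; projecting from $p$ and passing to the generic fiber $\widetilde{\mathcal{Q}}^h_\eta\to\mathbb{P}^1_F$ of $\widetilde{\pi}:\widetilde{\mathcal{Q}}^h\to W$, with $F=k(t_1,\dots,t_{n-h-1})$, Lemma \ref{gfib} reduces us to proving that $\widetilde{\mathcal{Q}}^h_\eta$ is $F$‑unirational. Now $\widetilde{\mathcal{Q}}^h_\eta$ is a quadric bundle over $\mathbb{P}^1_F$ with $h$‑dimensional fibers, so its total space has dimension $m:=h+1$; its discriminant has the same (odd) degree $\delta_{\widetilde{\mathcal{Q}}^h_\eta}=\delta_{\mathcal{Q}^h}$; and it is general over $\mathbb{P}^1_F$, since $\widetilde{\mathcal{Q}}^h_\eta$ is the generic member of the family $\{\mathcal{Q}^h_{|L}\}$ obtained by restricting $\mathcal{Q}^h$ to the lines $L$ of the pencil through $p$, and a general such $L$ is a general line of $\mathbb{P}^{n-h}$, so that $\mathcal{Q}^h_{|L}$ is general by the definition of a general quadric bundle in \ref{gen_d}. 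Hence $\widetilde{\mathcal{Q}}^h_\eta$ falls within the scope of Theorem \ref{main1} over the infinite field $F$.

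For part (i) the hypothesis $\delta_{\mathcal{Q}^h}\leq 3h+4$ becomes $\delta_{\widetilde{\mathcal{Q}}^h_\eta}\leq 3m+1$, and since $m\geq 2$ this forces $\delta_{\widetilde{\mathcal{Q}}^h_\eta}\leq 4m-1$, so $(-K_{\widetilde{\mathcal{Q}}^h_\eta})^m>0$ by Remark \ref{topsiR}; Theorem \ref{main1}(ii) applied over $F$ then gives that $\widetilde{\mathcal{Q}}^h_\eta$ is $F$‑unirational. For part (ii) the bound $\delta_{\mathcal{Q}^h}\leq 4h+3=4m-1$ again yields $(-K_{\widetilde{\mathcal{Q}}^h_\eta})^m>0$, and $m=h+1\leq 5$; moreover, iterating Remark \ref{lang}, the field $F=k(t_1,\dots,t_{n-h-1})$ is $C_{r+n-h-1}$, so the fiber of $\widetilde{\mathcal{Q}}^h_\eta\to\mathbb{P}^1_F$ over an $F$‑point of $\mathbb{P}^1_F$ not lying on the discriminant (one exists since $F$ is infinite) is a quadric in $\mathbb{P}^{h+1}_F$, i.e.\ comes from a nonzero quadratic form in $h+2$ variables over the $C_{r+n-h-1}$ field $F$, hence has a nontrivial $F$‑zero because $h+2>2^{r+n-h-1}$. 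This produces an $F$‑rational point of $\widetilde{\mathcal{Q}}^h_\eta$; in fact every quadric bundle over $\mathbb{P}^1_F$ of this relative dimension has one, so here \emph{general with a point} coincides with \emph{general}, and Theorem \ref{main1}(i) applies over $F$. In both cases Lemma \ref{gfib} gives that $\mathcal{Q}^h$ is $k$‑unirational.

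For the addendum, suppose $h\leq 2$ and $\mathcal{Q}^h$ is merely smooth. Choosing $p$ a general $k$‑point, $Q_p=\pi^{-1}(p)$ is a smooth quadric, so $\widetilde{\mathcal{Q}}^h=\Bl_{Q_p}\mathcal{Q}^h$ is smooth; and one checks by a Bertini argument, using that the linear system of line‑preimages on $\mathcal{Q}^h$ is base‑point free with separable associated map, that for general $p$ the generic fiber $\widetilde{\mathcal{Q}}^h_\eta$ — which is birational to the preimage of the generic line through $p$ — is again a smooth quadric bundle over $\mathbb{P}^1_F$. Its total space has dimension $m=h+1\leq 3$, so the smooth addendum of Theorem \ref{main1} applies: in case (i) through part (ii) of that theorem (using $\delta_{\widetilde{\mathcal{Q}}^h_\eta}\leq 3m+1$ and the oddness of the discriminant, which makes the argument there land in $d_{m-2,m-2}+d_{m-1,m-1}+d_{m,m}\in\{1,3,5,7\}$), and in case (ii) through part (i) (using the $F$‑point constructed above, available since $h+2>2^{r+n-h-1}$, and $m\leq 5$). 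In each case $\widetilde{\mathcal{Q}}^h_\eta$ is $F$‑unirational and Lemma \ref{gfib} concludes.

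The main obstacle I anticipate is controlling the standing hypotheses under the reduction from $\mathbb{P}^{n-h}$ to $\mathbb{P}^1_F$: one must verify carefully that genericity of $\mathcal{Q}^h$ (and, for the addendum, smoothness) is inherited by $\widetilde{\mathcal{Q}}^h_\eta$ over the transcendental extension $F$, so that Theorem \ref{main1} may legitimately be invoked over $F$ rather than over $k$; and one must see that the $C_r$‑inequality $h+2>2^{r+n-h-1}$ in part (ii) is precisely the Lang‑type bound guaranteeing a rational point in a fiber after this base change. The remaining steps are routine bookkeeping with the numerical bounds through Remarks \ref{topsiR} and \ref{lang}.
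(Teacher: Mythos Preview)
Your proposal is correct and follows essentially the same approach as the paper: reduce via Section~\ref{Pn--->P1} and Lemma~\ref{gfib} to the quadric bundle $\widetilde{\mathcal{Q}}^h_\eta\to\mathbb{P}^1_F$, then invoke Theorem~\ref{main1} over $F$ using $\delta_{\widetilde{\mathcal{Q}}^h_\eta}=\delta_{\mathcal{Q}^h}$. The only cosmetic difference is in part~(ii): you produce the required $F$-point by picking a fiber over an $F$-point of $\mathbb{P}^1_F$ and applying the $C_{r+n-h-1}$ property of $F$ directly, whereas the paper phrases the same computation as the existence of a rational section of $\mathcal{Q}^h_{|H}\to H$ for a general hyperplane $H\subset\mathbb{P}^{n-h}$; these are the same Lang-type argument viewed from two sides.
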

\begin{proof}
By Lemma \ref{gfib} it is enough to prove that the quadric bundle $\widetilde{\mathcal{Q}}^h_{\eta}\rightarrow\mathbb{P}^1_F$ is unirational over $F = k(t_1,\dots,t_{n-h-1})$. Recall that by the construction in Section \ref{Pn--->P1} we have that $\delta_{\widetilde{\mathcal{Q}}^h_{\eta}} = \delta_{\mathcal{Q}^{h}}$. If $\delta_{\widetilde{\mathcal{Q}}^h_{\eta}} = \delta_{\mathcal{Q}^{h}}\leq 3h+4$ we conclude by Theorem \ref{main1}. 

When $\delta_{\mathcal{Q}^h}\leq 4h+3$ and $h\leq 4$ in order to apply Theorem \ref{main1} we need to produce a point on $\widetilde{\mathcal{Q}}^h_{\eta}$. Fix a general hyperplane $H\subset\mathbb{P}^{n-h}$. Keeping in mind the construction in Section \ref{Pn--->P1} notice that $\widetilde{\mathcal{Q}}^h_{\eta}$ has a point if and only if $\mathcal{Q}^h_{|H}\rightarrow H$ has a rational section. Finally, since $k$ is $C_r$ and $h+2 > 2^{r+n-h-1}$ Remark \ref{lang} yields that such a rational section exists.   
\end{proof}

\begin{Remark}\label{algclo}
By Corollary \ref{corEn} we have that a general quadric bundle $\mathcal{Q}^h\rightarrow\mathbb{P}^{n-h}$ with discriminant of odd degree $\delta_{\mathcal{Q}^h} \leq 4h+3$ is unirational in the following cases:
\begin{itemize}
\item[(i)] $k$ is algebraically closed and $n-h = 2$, $h\in\{1,2\}$, or  $n-h = 3$, $h\in\{3,4\}$;
\item[(ii)] $k$ is $C_1$ and $n-h = 1$, $1\leq h\leq 4$, or $n-h = 2$, $h\in\{3,4\}$;
\item[(iii)] $k$ is $C_2$ and $n-h = 1$, $h\in\{3,4\}$.
\end{itemize}
\end{Remark}

\begin{Corollary}\label{corc1q2}
Assume that $k$ is algebraically closed and let $\mathcal{Q}^1\rightarrow\mathbb{P}^{2}$ be a smooth conic bundle. If the discriminant curve $D_{\mathcal{Q}_1}$ has a point $p\in D_{\mathcal{Q}_1}$ of multiplicity $m_p$ and $\delta_{\mathcal{Q}^1}\leq m_p + 7$. Then $\mathcal{Q}^1$ is unirational.  
\end{Corollary}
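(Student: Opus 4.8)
The plan is to project from $p$, reduce to a conic bundle over $\mathbb{P}^1$, and then invoke the known unirationality of conic bundles over $\mathbb{P}^1$ with $(-K)^2>0$. Following Section \ref{Pn--->P1}, let $W=\Bl_p\mathbb{P}^2$ with exceptional curve $E$, let $\widetilde{\pi}_p:W\to\mathbb{P}^1$ be the morphism induced by the projection from $p$, and let $\widetilde{\pi}:\widetilde{\mathcal{Q}}^1\to W$ be the pull-back of $\pi$; since $\pi$ is flat, $\widetilde{\mathcal{Q}}^1=\mathcal{Q}^1\times_{\mathbb{P}^2}W$ is the blow-up of $\mathcal{Q}^1$ along $Q_p=\pi^{-1}(p)$. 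By Lemma \ref{gfib} it suffices to prove that the generic fibre $\widetilde{\mathcal{Q}}^1_{\eta}\to\mathbb{P}^1_F$, a conic bundle over $F=k(t)$, is $F$-unirational.

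Next I would analyse $\widetilde{\mathcal{Q}}^1_{\eta}$. Its discriminant has degree $\delta_{\widetilde{\mathcal{Q}}^1_{\eta}}=\delta_{\mathcal{Q}^1}$, but it is not reduced: the total transform of $D_{\mathcal{Q}^1}$ on $W$ is $\widetilde{D}_{\mathcal{Q}^1}+m_pE$, so restricting to the strict transform of a general line through $p$ shows that the point $0_{\eta}\in\mathbb{P}^1_F$ cut out by $E$ occurs with multiplicity $m_p$ in the discriminant, while the remaining $\delta_{\mathcal{Q}^1}-m_p$ points lie on $\widetilde{D}_{\mathcal{Q}^1}$ and are simple. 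The fibre of $\widetilde{\mathcal{Q}}^1_{\eta}$ over $0_{\eta}$ is the base change to $F$ of $Q_p$; as $p\in D_{\mathcal{Q}^1}$ this conic has rank at most two, and since $k$ is algebraically closed all its components are defined over $k$, hence over $F$. I would then replace $\widetilde{\mathcal{Q}}^1_{\eta}$ by a relatively minimal smooth conic bundle $\mathcal{C}\to\mathbb{P}^1_F$ birational to it: away from $0_{\eta}$ nothing changes, while over $0_{\eta}$ the $F$-rational structure of the degenerate fibre allows one to resolve the (at worst Du Val) surface singularities that $\widetilde{\mathcal{Q}}^1_{\eta}$ may carry there and to contract the superfluous curves, so that the fibre of $\mathcal{C}$ over $0_{\eta}$ becomes smooth. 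Thus the discriminant of $\mathcal{C}$ is supported on the $\delta_{\mathcal{Q}^1}-m_p$ simple points coming from $\widetilde{D}_{\mathcal{Q}^1}$, i.e. $\delta_{\mathcal{C}}=\delta_{\mathcal{Q}^1}-m_p$.

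By Remark \ref{topsiR} this gives $(-K_{\mathcal{C}})^2=8-\delta_{\mathcal{C}}=8-\delta_{\mathcal{Q}^1}+m_p\geq 1>0$, where the inequality is precisely the hypothesis $\delta_{\mathcal{Q}^1}\leq m_p+7$. Moreover $F=k(t)$ is a $C_1$ field by Remark \ref{lang}, so every conic over $F$ has an $F$-point and in particular $\mathcal{C}(F)\neq\emptyset$. Therefore \cite[Corollary 8]{KM17} applies and $\mathcal{C}$ is $F$-unirational, whence $\widetilde{\mathcal{Q}}^1_{\eta}$, and therefore $\mathcal{Q}^1$, is unirational.

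The hard part is the second step, namely checking that on passing to the relatively minimal model $\mathcal{C}$ the discriminant degree drops by exactly $m_p$ and not less. This rests on a local study of $\widetilde{\mathcal{Q}}^1_{\eta}$ along the fibre over $0_{\eta}$: it is immediate when $Q_p$ has rank two (a single blow-down of a $(-1)$-curve defined over $F$), but subtler when $Q_p$ has rank one, i.e. over a singular point of $D_{\mathcal{Q}^1}$, where $\widetilde{\mathcal{Q}}^1_{\eta}$ is itself singular over $0_{\eta}$ and one must use the smoothness of $\mathcal{Q}^1$ to identify the singularity type and carry out the resolution.
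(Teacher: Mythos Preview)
Your proof is correct and follows essentially the same route as the paper's: project from $p$ to obtain a conic bundle over $\mathbb{P}^1_F$ with $F=k(t)$, contract the degenerate fibre coming from $Q_p$ (the paper phrases this as ``blowing down one of the components of the fiber of $\mathcal{Q}^1\to\mathbb{P}^2$ over $p$'') to drop the discriminant degree to at most $7$, produce an $F$-point from the $C_1$ property of $F$, and conclude via Lemma~\ref{gfib}. The paper invokes its own Proposition~\ref{p1uni} rather than \cite[Corollary 8]{KM17} at the last step and is terser about the discriminant reduction---in particular it, too, does not spell out the rank-one case you flag---but the substance is identical.
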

\begin{proof}
Consider the conic bundle $\widetilde{\mathcal{Q}}^1_{\eta}$ over $F = k(t)$ in Section \ref{Pn--->P1} constructed by projecting from the point $p\in D_{\mathcal{Q}_1}$ of multiplicity $m_p$. Assume that $m_p\geq 2$. The conic bundle $\widetilde{\mathcal{Q}}^1_{\eta}$ has then a multiple fiber $F_p$ defined over $k$ with two $A_1$ singularities on it also defined over $k$. 

By blowing-up these two singular points and then blowing-down the strict transform of $F_p$ we get a conic bundle $\overline{\mathcal{Q}}^1_{\eta}$ with a new reducible fiber whose components are defined over $k$. So we may blow-down one of these components to get a conic bundle $\hat{\mathcal{Q}}^1_{\eta}$ with seven reducible fibers which is therefore unirational. 

If $m_p = 1$ then $\widetilde{\mathcal{Q}}^1_{\eta}$ is already in the form $\overline{\mathcal{Q}}^1_{\eta}$ and to conclude we may proceed as in the last part of the proof for the case $m_p\geq 2$. Let $L\subset\mathbb{P}^2$ be a general line. By Remark \ref{lang} $\mathcal{Q}^1_{L}\rightarrow L$ has a section and hence $\widetilde{\mathcal{Q}}^1_{\eta}$ has a point. Therefore, by Proposition \ref{p1uni} $\widetilde{\mathcal{Q}}^1_{\eta}$ is unirational and to conclude it is enough to apply Lemma \ref{gfib}.        
\end{proof}

\begin{Corollary}\label{cor1b2}
Assume that $k$ is algebraically closed and let $\mathcal{Q}^2\rightarrow\mathbb{P}^{2}$ be a smooth quadric bundle. If $\delta_{\mathcal{Q}^2}\leq 12$ then $\mathcal{Q}^2$ is unirational.  
\end{Corollary}
\begin{proof}
Fix a point $p\in \mathbb{P}^2$ and consider the quadric bundle $\widetilde{\mathcal{Q}}^2_{\eta}$ in Section \ref{Pn--->P1}. Then $\widetilde{\mathcal{Q}}^2_{\eta}$ is smooth and $\delta_{\widetilde{\mathcal{Q}}^2_{\eta}}\leq 12$. Arguing as in the proof of Theorem \ref{main1} we get that the conic bundle $\widetilde{\mathcal{Q}}^1_{\eta,n-3}\subset \widetilde{\mathcal{Q}}^2_{\eta}$ as defined in the beginning of Section \ref{QBpv} has discriminant of degree $\delta_{\widetilde{\mathcal{Q}}^1_{\eta,n-3}}\leq 8$. Now, $\widetilde{\mathcal{Q}}^1_{\eta,n-3}$ spreads to a conic bundle $\mathcal{Q}^1\rightarrow\mathbb{P}^2$ which is contained in $\mathcal{Q}^2$.

We distinguish two cases: either $D_{\mathcal{Q}^1}$ contains $p$ with multiplicity $m_p$ and $\delta_{\mathcal{Q}^1} = m_p+7$, or $D_{\mathcal{Q}^1}$ does not contain $p$ and $\delta_{\mathcal{Q}^1} = 8$. In the first case arguing as in the proof of Corollary \ref{cor1b2} we get that $\widetilde{\mathcal{Q}}^1_{\eta,n-3}$ is unirational. In the second case we project from a smooth point $q\in D_{\mathcal{Q}^1}$ and proceeding as in the proof of Corollary \ref{cor1b2} we construct a conic bundle $\hat{\mathcal{Q}}^1_{\eta}$ with seven reducible fibers birational to $\widetilde{\mathcal{Q}}^1_{\eta,n-3}$. Finally, by Corollary \ref{corc1q2} we get that $\widetilde{\mathcal{Q}}^1_{\eta,n-3}$ is unirational and Proposition \ref{Enr} yields the unirationality of $\mathcal{Q}^2$. 
\end{proof}

\begin{Proposition}\label{gen333}
Let $\mathcal{Q}^{h}\subset\mathbb{P}^{n-h}\times \mathbb{P}^{h+1}$ be a divisor of bidegree $(3,2)$. Assume that $k$ is $C_r$ with $h+2 > 2^{r+n-h-1}$ and $h\geq 3$. If either $\mathcal{Q}^{h}$ is general or $\mathcal{Q}^{h}$ is smooth and $\ch(k) = 0$ then $\mathcal{Q}^{h}$ is unirational. 
\end{Proposition}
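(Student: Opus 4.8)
The plan is to reduce to a quadric bundle over $\mathbb{P}^1$ by the construction of Section~\ref{Pn--->P1} and then quote Proposition~\ref{333P1} and Corollary~\ref{smooth}. Since $\mathcal{Q}^h\subset\mathbb{P}^{n-h}\times\mathbb{P}^{h+1}$ has bidegree $(3,2)$, it is a splitting quadric bundle of multidegree $(3,\dots,3)$, so $\delta_{\mathcal{Q}^h}=3(h+2)=3h+6$. By Lemma~\ref{gfib} it is enough to prove that the generic fibre $\widetilde{\mathcal{Q}}^h_{\eta}\to\mathbb{P}^1_F$ of the fibration associated with a projection from a general point, as in Section~\ref{Pn--->P1}, is unirational over $F=k(t_1,\dots,t_{n-h-1})$. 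By construction $\widetilde{\mathcal{Q}}^h_{\eta}$ is the restriction of $\mathcal{Q}^h$ to a general line of $\mathbb{P}^{n-h}$, hence a divisor of bidegree $(3,2)$ in $\mathbb{P}^1_F\times\mathbb{P}^{h+1}_F$ with $\delta_{\widetilde{\mathcal{Q}}^h_{\eta}}=3h+6$; moreover, by the definition of generality recalled in Section~\ref{gen_d}, if $\mathcal{Q}^h$ is general then $\widetilde{\mathcal{Q}}^h_{\eta}$ is a general such divisor, while if $\mathcal{Q}^h$ is smooth and $\ch(k)=0$ then, by Bertini, $\widetilde{\mathcal{Q}}^h_{\eta}$ is smooth over $F$.

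Next I would produce an $F$-point on $\widetilde{\mathcal{Q}}^h_{\eta}$. By Remark~\ref{lang}, since $k$ is $C_r$ the field $F=k(t_1,\dots,t_{n-h-1})$ is $C_{r+n-h-1}$. Choosing a general $F$-point of $\mathbb{P}^1_F$, the fibre of $\widetilde{\mathcal{Q}}^h_{\eta}$ over it is a quadric in $\mathbb{P}^{h+1}_F$, that is a single quadratic form in $h+2$ variables over $F$; as $h+2>2^{r+n-h-1}$, Remark~\ref{lang} gives a non-trivial zero in $F^{h+2}$, hence an $F$-point of $\widetilde{\mathcal{Q}}^h_{\eta}$. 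Note that this uses the hypothesis only through the inequality $h+2>2^{r+n-h-1}$, and works in both the general and the smooth case.

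Finally I would invoke the $\mathbb{P}^1$-base results with $n$ replaced by $h+1$ and $k$ by $F$; since $h\geq 3$ we have $h+1\geq 4$. If $\mathcal{Q}^h$ is general, then $\widetilde{\mathcal{Q}}^h_{\eta}$ is a general divisor of bidegree $(3,2)$ in $\mathbb{P}^1_F\times\mathbb{P}^{h+1}_F$ with an $F$-point, so Proposition~\ref{333P1} yields that $\widetilde{\mathcal{Q}}^h_{\eta}$ is unirational over $F$. If $\mathcal{Q}^h$ is smooth with $\ch(k)=0$, then $\widetilde{\mathcal{Q}}^h_{\eta}$ is a smooth divisor of bidegree $(3,2)$ in $\mathbb{P}^1_F\times\mathbb{P}^{h+1}_F$, the field $F$ is $C_{r+n-h-1}$ of characteristic $0$, and $h+1>2^{r+n-h-1}-1$, so Corollary~\ref{smooth} applies. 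In either case $\widetilde{\mathcal{Q}}^h_{\eta}$ is $F$-unirational, and Lemma~\ref{gfib} gives the unirationality of $\mathcal{Q}^h$ over $k$.

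The only real content is the bookkeeping, so I expect the main (mild) obstacle to be checking that the single inequality $h+2>2^{r+n-h-1}$ simultaneously (i) furnishes the $F$-point needed to run Proposition~\ref{333P1}, via Tsen--Lang over the $C_{r+n-h-1}$ field $F$, and (ii) is exactly the numerical hypothesis ``$n>2^r-1$'' of Corollary~\ref{smooth} after the substitution $n\mapsto h+1$, $r\mapsto r+n-h-1$; together with verifying that the properties ``general'' and ``smooth'' are inherited by the restriction of $\mathcal{Q}^h$ to a general line of $\mathbb{P}^{n-h}$. No geometric input beyond Lemma~\ref{gfib}, Proposition~\ref{333P1} and Corollary~\ref{smooth} should be needed.
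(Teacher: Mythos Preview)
Your proposal is correct and follows essentially the same route as the paper: reduce via Section~\ref{Pn--->P1} and Lemma~\ref{gfib} to a bidegree $(3,2)$ divisor $\widetilde{\mathcal{Q}}^h_{\eta}\subset\mathbb{P}^1_F\times\mathbb{P}^{h+1}_F$, produce an $F$-point using that $F$ is $C_{r+n-h-1}$ and $h+2>2^{r+n-h-1}$, and conclude with Proposition~\ref{333P1} in the general case and Corollary~\ref{smooth} in the smooth characteristic-zero case. The only cosmetic difference is that the paper phrases the existence of the $F$-point as ``$\mathcal{Q}^h_{|H}\to H$ has a rational section for a general hyperplane $H$'' (as in the proof of Corollary~\ref{corEn}), which is exactly your ``the fibre over a general $F$-point of $\mathbb{P}^1_F$ has a zero''.
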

\begin{proof}
The quadric bundle $\widetilde{\mathcal{Q}}^h_{\eta}$ is a divisor of bidegree $(3,2)$ in $\mathbb{P}^1\times\mathbb{P}^{h+1}$ over $F = k(t_1,\dots,t_{n-h-1})$. Arguing as in the proof of Corollary \ref{corEn} we produce a point of $\widetilde{\mathcal{Q}}^h_{\eta}$. So Proposition \ref{333P1} and Corollary \ref{smooth} imply that $\widetilde{\mathcal{Q}}^h_{\eta}$ is unirational and Lemma \ref{gfib} yields the unirationality of $\mathcal{Q}^{h}$.   
\end{proof}

Finally, we consider quadric bundles $\pi:\mathcal{Q}^{n-1}\rightarrow\mathbb{P}^1$ over a number field. 

\begin{Lemma}\label{CTB}
Let $\pi:\mathcal{Q}^{h}\rightarrow\mathbb{P}^{n-h}$ be a smooth quadric bundle over a number field $k$ with discriminant of odd degree. If $h\geq 3$ then $\mathcal{Q}^{h}$ has a point. 
\end{Lemma}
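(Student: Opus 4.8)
The idea is to reduce to the Hasse principle for quadric bundles of relative dimension $h\geq 3$ \cite[Proposition 3.9]{CSS87}, so the only real task is to verify the local solubility hypothesis: that $\mathcal{Q}^h\to\mathbb{P}^{n-h}$ has a $k_v$-point for every place $v$ of the number field $k$. First I would recall that $\mathbb{P}^{n-h}_k$ trivially has a rational point (hence a point over every completion) and satisfies weak approximation, so the only thing standing between us and the cited result is the existence of local points on the total space $\mathcal{Q}^h$.

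The key observation is that the discriminant $D_{\mathcal{Q}^h}\subset\mathbb{P}^{n-h}$ has \emph{odd} degree $\delta_{\mathcal{Q}^h}$. Fix a place $v$ of $k$ and a general line $L\subset\mathbb{P}^{n-h}$ defined over $k_v$; then $\mathcal{Q}^h_{|L}\to L\cong\mathbb{P}^1_{k_v}$ is a quadric bundle of relative dimension $h$ whose discriminant on $\mathbb{P}^1$ has odd degree. By Remark \ref{Gro} it is splitting, say of multidegree $(d_{0,0},\dots,d_{h+1,h+1})$ with $\sum d_{i,i}=\delta_{\mathcal{Q}^h}$ odd; hence at least one $d_{i,i}$ is odd. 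Specializing the remaining $y_j$ to zero, the restriction of the defining equation to the sub-line bundle corresponding to that index $i$ becomes, after dehomogenizing $\mathbb{P}^1$, a single-variable polynomial $\sigma_{i,i}(1,t)$ of odd degree over $k_v$. Over the real place this has a real root by the intermediate value theorem; over a non-archimedean place one invokes Hensel together with the fact that a polynomial of odd degree over a $p$-adic field always has a root (a square class argument, or simply that $\prod$ of the local conditions cannot all fail) — more robustly, one may use Koll\'ar's result \cite[Corollary 1.8]{Kol99} that a smooth quadric bundle over $\mathbb{P}^1$ over a local field is unirational iff it has a point, combined with the Lang--Tsen theorem that such bundles over $\overline{k_v}$ are rational, to produce the needed $k_v$-point; the cleanest route, however, is just to note that odd-degree discriminant forces the quadratic form over $k_v(t)$ to have nontrivial Witt index, equivalently the generic fiber has a $k_v(t)$-point, hence $\mathcal{Q}^h_{|L}$ and a fortiori $\mathcal{Q}^h$ has a $k_v$-point.

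With local solubility at every place in hand, and $\mathbb{P}^{n-h}$ a rational variety satisfying weak approximation, \cite[Proposition 3.9]{CSS87} applies directly (this is where $h\geq 3$ is used) and yields a $k$-rational point on $\mathcal{Q}^h$. The main obstacle is the local point argument: one must handle archimedean places, the finitely many places of bad reduction for the quadric bundle, and the places dividing $2$ (recall $\ch k=0$ here, so $2$ is not an issue at the level of the bundle being well-defined, but the algebra of quadratic forms over $\mathbb{Q}_2$ still requires care). All other places are handled by the $C_2'$-nature of $p$-adic fields or by Chevalley--Warning-type counting. I expect the write-up to spend essentially all of its effort there; once the local points are produced the conclusion is immediate from the cited Hasse-principle statement.
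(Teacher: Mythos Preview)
Your global strategy is the paper's strategy: verify local solubility at every place of $k$ and then invoke \cite[Proposition 3.9]{CSS87}. The archimedean part of your argument is also essentially the paper's: restrict to a line, use that the splitting type has all $d_{i,i}$ of the same parity (hence all odd), and observe that a homogeneous binary form of odd degree has a real zero, which produces a real point of the form $([x_0:x_1],[0:\cdots:1:\cdots:0])$.

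The non-archimedean part, however, has a genuine gap. Your primary claim---that a univariate polynomial of odd degree over a $p$-adic field always has a root---is simply false (e.g.\ $t^{3}-2$ over $\mathbb{Q}_{7}$, or $t^{3}-p$ over $\mathbb{Q}_{p}$ for $p\equiv 1\pmod 3$). The proposed fallbacks do not rescue this: \cite[Corollary 1.8]{Kol99} is an equivalence ``unirational $\Leftrightarrow$ has a point'', so it cannot be used to \emph{produce} the point; and the assertion that an odd-degree discriminant divisor forces the generic quadratic form over $k_v(t)$ to be isotropic is not a standard fact and you give no argument for it (for $h+2\leq 8$ it does not follow from $k_v(t)$ being $C_3$ either).

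The paper's fix is much simpler and bypasses the odd-degree hypothesis entirely at the finite places: take a general fiber $Q_q^{h}=\pi^{-1}(q)\subset\mathbb{P}^{h+1}$ over a $k$-rational point $q\in\mathbb{P}^{n-h}$. This is a smooth quadric of dimension $h\geq 3$, i.e.\ a quadratic form in at least five variables, and such a form is isotropic over every $p$-adic completion (Meyer--Hasse--Serre, \cite[Chapter IV, Theorem 6]{Se77}; equivalently, $p$-adic fields are $C_2$). So the hypothesis $h\geq 3$ is used \emph{twice}: once to get local points at the non-archimedean places, and once in \cite[Proposition 3.9]{CSS87}. With this correction the proof goes through exactly as you outlined.
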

\begin{proof}
For a place $v$ of $k$ we will denote by $k_v$ the completion of $k$ at $v$. Consider a general line $L\subset \mathbb{P}^{n-h}$ and the quadric bundle $\pi_{|L}:\mathcal{Q}^{h}_{|L}\rightarrow L$. Then $\delta_{\mathcal{Q}^{h}_{|L}}$ is odd and $\mathcal{Q}^{h}_{|L}$ has a point over the reals and hence $\mathcal{Q}^{h}_{|L}(k_v)$ in not empty for all real places of $k$. Therefore, also $\mathcal{Q}^{h}(k_v)$ in not empty for all real places of $k$. 

Now, a general fiber $Q_q^h = \pi^{-1}(q)\subset\mathbb{P}^{h+1}$ is a quadric hypersurface of dimension $h\geq 3$ over $k$. By \cite[Chapter IV, Theorem 6]{Se77} $Q_q^h$ has a point over all the $p$-adic places of $k$. We conclude that $\mathcal{Q}^{h}$ has a point over $k_v$ for all places $v$, both real and $p$-adic, of $k$. Finally, \cite[Proposition 3.9]{CSS87} yields that $\mathcal{Q}^{h}$ has a point over the base field $k$.
\end{proof}

\begin{thm}\label{main_nf}
Let $\pi:\mathcal{Q}^{n-1}\rightarrow\mathbb{P}^1$ be a general quadric bundle over a number field. If $(-K_{\mathcal{Q}^{n-1}})^n > 0$ and $\delta_{\mathcal{Q}^{n-1}}$ is odd then $\mathcal{Q}^{n-1}$ is unirational. Furthermore, if $n \leq 3$ the above statement holds for any smooth quadric bundle. 
\end{thm}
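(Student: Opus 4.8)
The plan is to deduce the statement from Theorem \ref{main1}, the only gap to bridge being the existence of a $k$-rational point in the single degree configuration that Theorem \ref{main1}(ii) leaves open. By Remark \ref{topsiR} the hypothesis $(-K_{\mathcal{Q}^{n-1}})^n>0$ is equivalent to $\delta_{\mathcal{Q}^{n-1}}\leq 4n-1$. I would first rerun the argument from the proof of Theorem \ref{main1}: since $\mathcal{Q}^{n-1}$ is general we may assume that $\mathcal{Q}^1_{n-3}$ satisfies the hypotheses of Proposition \ref{p1uni}, and since $\delta_{\mathcal{Q}^{n-1}}$ is odd every $d_{i,i}$ is odd, so $d_{n-2,n-2}+d_{n-1,n-1}+d_{n,n}$ is odd; combined with $\delta_{\mathcal{Q}^{n-1}}\leq 4n-1$ this forces $d_{n-2,n-2}+d_{n-1,n-1}+d_{n,n}\in\{1,3,5,7\}$ unless $(d_{n-2,n-2},d_{n-1,n-1},d_{n,n})=(3,3,3)$. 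In the former case Proposition \ref{p1uni}(i) already gives the unirationality of $\mathcal{Q}^{n-1}$ over $k$, with no use of a rational point.

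Next I would dispose of the case $(d_{n-2,n-2},d_{n-1,n-1},d_{n,n})=(3,3,3)$. Exactly as in the proof of Corollary \ref{realcf}, the bound $\delta_{\mathcal{Q}^{n-1}}\leq 4n-1$ forces $n\geq 4$ and $d_{n-3,n-3}=d_{n-4,n-4}=3$, so that $\mathcal{Q}^3_{n-4}=\mathcal{Q}^{n-1}\cap\{y_0=\dots=y_{n-4}=0\}$ is a general divisor of bidegree $(3,2)$ in $\mathbb{P}^1\times\mathbb{P}^4$ whose discriminant has odd degree $\delta_{\mathcal{Q}^3_{n-4}}=15$; in particular $\mathcal{Q}^3_{n-4}$ is smooth by generality. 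Its fibres over $\mathbb{P}^1$ are $3$-dimensional quadrics, so Lemma \ref{CTB} applies and furnishes a $k$-rational point on $\mathcal{Q}^3_{n-4}$. Proposition \ref{333P1} then yields that $\mathcal{Q}^3_{n-4}$ is unirational, and since $\mathcal{Q}^3_{n-4}\subset\mathcal{Q}^{n-1}$ dominates $\mathbb{P}^1$, Proposition \ref{Enr} (as recorded at the start of Section \ref{QBpv}) shows that $\mathcal{Q}^{n-1}$ is unirational.

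For the last assertion I would re-inspect the two preceding paragraphs for a smooth, not necessarily general, quadric bundle with $n\leq 3$. When $n=3$ the bound $\delta_{\mathcal{Q}^2}\leq 11$ and the common odd parity of the $d_{i,i}$ rule out $(d_{1,1},d_{2,2},d_{3,3})=(3,3,3)$, which would force $\delta_{\mathcal{Q}^2}\geq 15$, so $d_{1,1}+d_{2,2}+d_{3,3}\in\{1,3,5,7\}$; Lemma \ref{sm_h=2} guarantees that $\sigma$ has no factor depending only on $x_0,x_1$, hence Proposition \ref{p1uni}(i) applies and no rational point is needed. When $n=2$ one has $\delta_{\mathcal{Q}^1}\leq 7$ and it suffices, by \cite[Corollary 8]{KM17}, to exhibit a $k$-rational point on the smooth conic bundle $\mathcal{Q}^1$; this is obtained as in the proof of Theorem \ref{main1} for $n=2$, the point being furnished over a number field by local solubility at every place (at the archimedean ones by the odd-degree argument of Corollary \ref{realcf}, at the finite ones because a conic bundle over $\mathbb{P}^1$ over a local field has a rational point) together with the Hasse principle in this range of the discriminant degree.

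The step I expect to be the main obstacle is the production of a $k$-rational point in the configuration $(3,3,3)$: this is precisely where the Colliot-Th\'el\`ene--Sansuc--Swinnerton-Dyer Hasse principle for quadric bundles with fibre dimension $\geq 3$, packaged as Lemma \ref{CTB}, is indispensable, and it is what lets the number-field hypothesis push the range of Theorem \ref{main1}(ii) from $\delta_{\mathcal{Q}^{n-1}}\leq 3n+1$ up to the full positivity range $\delta_{\mathcal{Q}^{n-1}}\leq 4n-1$. A secondary delicate point is the smooth conic bundle case $n=2$, where no analogue of Lemma \ref{sm_h=2} is available and one must instead argue for the existence of the rational point directly.
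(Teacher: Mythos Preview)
Your argument for the main statement is correct and follows the paper's own proof essentially verbatim: reduce via Theorem \ref{main1} to the single configuration $(d_{n-2,n-2},d_{n-1,n-1},d_{n,n})=(3,3,3)$, pass to the general bidegree $(3,2)$ divisor $\mathcal{Q}^3_{n-4}\subset\mathbb{P}^1\times\mathbb{P}^4$, invoke Lemma \ref{CTB} to obtain a $k$-point, and conclude with Proposition \ref{333P1} and Proposition \ref{Enr}. (A tiny slip: for $n=3$ the configuration $(3,3,3)$ would only force $\delta_{\mathcal{Q}^2}\geq 12$, not $15$, since $d_{0,0}\geq d_{1,1}=3$; the conclusion $12>11$ is unchanged.)

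The genuine problem is your treatment of the smooth case $n=2$. Neither of the two ingredients you invoke is available. First, a smooth conic bundle over $\mathbb{P}^1$ over a non-archimedean local field need not have a rational point: the fibres are conics, and smooth conics over $\mathbb{Q}_p$ can be pointless, so you do not get local solubility at the finite places for free. Second, the Hasse principle for conic bundles over $\mathbb{P}^1$ is known to fail already for $\delta_{\mathcal{Q}^1}=5$ (equivalently, for smooth cubic surfaces), so ``the Hasse principle in this range of the discriminant degree'' is not a theorem you can cite. Thus your production of a $k$-point on $\mathcal{Q}^1$ breaks down.

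Fortunately no point is needed. Since $\delta_{\mathcal{Q}^1}$ is odd and at most $7$, Proposition \ref{p1uni}(i) (equivalently Lemma \ref{L_1-7}(ii)) applies directly once its side hypotheses are checked, and smoothness of $\mathcal{Q}^1$ supplies both: the discriminant polynomial $\rho$ cannot vanish identically because the generic fibre is a smooth conic, and $\sigma$ cannot have a factor $\alpha(x_0,x_1)$ depending only on the base variables, since then every point of the fibre over $\{\alpha=0\}$ would be singular on $\mathcal{Q}^1$. This is the route implicit in the paper's deferral to the last line of the proof of Theorem \ref{main1}; the citation of \cite[Corollary 8]{KM17} there is for context (cf.\ Remark \ref{sing_c}), not because one must manufacture a rational point in the odd-discriminant smooth case.
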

\begin{proof}
Note that the only case in the proof of Theorem \ref{main1} for which the existence of a point is needed is $(d_{n-2,n-2},d_{n-1,n-1},d_{n,n})= (3,3,3)$. In this case we have $d_{n-3,d-3} = d_{n-4,n-4} = 3$ otherwise $\delta_{\mathcal{Q}^{n-1}} > 4n-1$. So $\mathcal{Q}^{3}_{n-4}$ is a divisor of bidegree $(3,2)$ in $\mathbb{P}^1\times\mathbb{P}^4$.

By Lemma \ref{CTB} a general divisor of bidegree $(3,2)$ in $\mathbb{P}^1\times\mathbb{P}^4$ has point, and hence the claim follows by Propositions \ref{Enr}, \ref{333P1}. 
\end{proof}

We sum up the main results of this section as follows:

\begin{Corollary}\label{cr_A}
Let $\pi:\mathcal{Q}^{n-1}\rightarrow\mathbb{P}^{1}$ be a quadric bundle over an infinite field $k$. Assume that $(-K_{\mathcal{Q}^{n-1}})^n > 0$ and $\delta_{\mathcal{Q}^{n-1}}$ is odd. If either 
\begin{itemize}
\item[(i)] $n\leq 5$, $\mathcal{Q}^{n-1}$ has a point and is otherwise general; or
\item[(ii)] $\mathcal{Q}^{n-1}$ is general and $\delta_{\mathcal{Q}^{n-1}}\leq 3n+1$; or
\item[(iii)] $\mathcal{Q}^{n-1}$ is general and $k$ is a number field;
\end{itemize}
then $\mathcal{Q}^{n-1}$ is unirational.
\end{Corollary}
\begin{proof}
It is enough to apply Theorems \ref{main1} and \ref{main_nf}.
\end{proof} 

\begin{Corollary}\label{height}
Let $\pi:\mathcal{Q}^{n-1}\rightarrow\mathbb{P}^1$ be a general quadric bundle over a number field $k$. If $(-K_{\mathcal{Q}^{n-1}})^n > 0$ and $\delta_{\mathcal{Q}^{n-1}}$ is odd then there exists an $\epsilon > 0$ such that for any open subset $U\subset \mathcal{Q}^{n-1}$
$$
N(U,B) = \sharp\{p\in U(k) \: | \: \text{ht}(p)\leq B\} \geq c_{\mathcal{Q}^{n-1}}B^{\epsilon}
$$
for $B\mapsto \infty$, where $c_{\mathcal{Q}^{n-1}}$ depends on $\mathcal{Q}^{n-1}$, and $\text{ht}$ is the multiplicative height \cite[Definition 1.5.4]{BG06}. 
\end{Corollary}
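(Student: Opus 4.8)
The plan is to deduce the statement from the unirationality of $\mathcal{Q}^{n-1}$ provided by Theorem \ref{main_nf}, combined with Schanuel's asymptotic for rational points of bounded height in projective space and the functoriality of heights under rational maps. Throughout one may assume $U\neq\emptyset$, the empty case being vacuous.

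First I would invoke Theorem \ref{main_nf}: under the stated hypotheses ($k$ a number field, $\mathcal{Q}^{n-1}$ general, $(-K_{\mathcal{Q}^{n-1}})^n>0$ and $\delta_{\mathcal{Q}^{n-1}}$ odd) there is a dominant rational map $\phi:\mathbb{P}^n_k\dasharrow\mathcal{Q}^{n-1}$ defined over $k$. Since $\dim\mathcal{Q}^{n-1}=n=\dim\mathbb{P}^n$, this $\phi$ is generically finite of some degree $\delta$. Let $V_0\subseteq\mathbb{P}^n$ be the nonempty open locus on which $\phi$ is a morphism, fix an embedding $\mathcal{Q}^{n-1}\hookrightarrow\mathbb{P}^M$ by a very ample divisor (with respect to which heights on $\mathcal{Q}^{n-1}$ are computed), and write $\phi_{|V_0}$ by $M+1$ homogeneous forms of some common degree $e\geq 1$. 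The height machine (functoriality of the multiplicative height, \cite{BG06}) then yields a constant $C>0$ with $\text{ht}(\phi(p))\leq C\,\text{ht}(p)^e$ for all $p\in V_0(k)$.

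Next, given a nonempty open $U\subseteq\mathcal{Q}^{n-1}$, I would set $V:=V_0\cap\phi^{-1}(U)$, a nonempty open subset of $\mathbb{P}^n$ because $\phi$ is dominant. By Schanuel's theorem the number of $\mathbb{P}^n(k)$-points of height $\leq T$ grows like a positive multiple of $T^{n+1}$, and since $\mathbb{P}^n\setminus V$ is a proper closed subset it carries only $o(T^{n+1})$ such points; hence $\sharp\{p\in V(k):\text{ht}(p)\leq T\}\geq c_1 T^{n+1}$ for some $c_1>0$ and all $T$ large. Specializing $T=(B/C)^{1/e}$, each such $p$ gives a point $\phi(p)\in U(k)$ with $\text{ht}(\phi(p))\leq B$, and since $\phi$ is generically finite each point of $U(k)$ has at most $\delta$ preimages among these $p$; dividing by $\delta$ produces
$$
N(U,B)\ \geq\ \frac{1}{\delta}\,\sharp\{p\in V(k):\text{ht}(p)\leq (B/C)^{1/e}\}\ \geq\ \frac{c_1}{\delta\, C^{(n+1)/e}}\, B^{(n+1)/e}
$$
for $B\to\infty$, so the conclusion holds with $\epsilon=(n+1)/e$ and $c_{\mathcal{Q}^{n-1}}=c_1\,\delta^{-1}C^{-(n+1)/e}$.

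I expect no serious obstacle: this is the standard argument showing that unirational varieties over number fields have abundant rational points, now made quantitative, and the heavy lifting is entirely contained in Theorem \ref{main_nf}. The only points requiring care are bookkeeping — verifying that the indeterminacy locus of $\phi$ together with the preimage of $\mathcal{Q}^{n-1}\setminus U$ forms a proper closed subset of $\mathbb{P}^n$, so that Schanuel's asymptotic is unaffected by its removal, and carrying the height inequality correctly through the rational map.
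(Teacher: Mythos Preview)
Your proposal is correct and follows essentially the same approach as the paper: invoke Theorem \ref{main_nf} to obtain a dominant rational map $\mathbb{P}^n\dasharrow\mathcal{Q}^{n-1}$, use the degree-$e$ height inequality to pull back a bound on $\mathcal{Q}^{n-1}$ to a bound on $\mathbb{P}^n$, apply Schanuel's asymptotic on $\mathbb{P}^n$, and subtract the contribution of a proper closed subset. The only cosmetic difference is that the paper invokes Pila's bound \cite{Pi95} to control the count on the exceptional closed subset, whereas you simply use that any proper closed subset carries $o(T^{n+1})$ points; your version is sufficient and slightly more economical. One small point: when you divide by $\delta$ you implicitly need all fibers over $U(k)$ to be finite, so you should further shrink $V$ to exclude the (closed) locus where $\phi$ fails to be quasi-finite---the paper does this explicitly by restricting to the open subset of $\mathcal{Q}^{n-1}$ over which $\theta$ is finite.
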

\begin{proof}
By Theorem \ref{main_nf} there is a dominant rational map $\theta:\mathbb{P}^n\dasharrow \mathcal{Q}^{n-1}\subset\mathbb{P}^N$ given by polynomials of a certain degree, say $d$. Then points of $\mathbb{P}^n$ of height at most $B^{\frac{1}{d}}$ are mapped to points of $X$ of height at most $B$. 

Let $V\subset \mathcal{Q}^{n-1}$ be the subset over which $\theta$ is finite. Then the number of points of height at most $B$ of $U$ goes at least as the number of points of height at most $B^{\frac{1}{d}}$ of $\theta^{-1}(U\cap V)$ which in turn goes at least as the number of points of height at most $B^{\frac{1}{d}}$ of $\mathbb{P}^n$ minus the number of points of height at most $B^{\frac{1}{d}}$ of a closed subset $Z\subset \mathbb{P}^n$. 

Now, to conclude it is enough to observe that $N(\mathbb{P}^n,B^{\frac{1}{d}})$ goes as $c(n)B^{\frac{n+1}{d}}$ \cite[Theorem 2.1]{Pe02} while $N(Z_i,B^{\frac{1}{d}})$ goes as $c'(\dim(Z_i),a_i,n)B^{\frac{1}{d}(\dim(Z_i)+\frac{1}{a_i}+\epsilon)}$ where $Z_i$ is an irreducible component of $Z$ of degree $a_i$ and $0 <\epsilon\ll 1$ \cite[Theorem B]{Pi95}.
\end{proof}

Recently, the distribution of rational points on divisors in products of projective spaces has been much investigated, see for instance \cite{BH19}, \cite{BH20}.

\section{Quadric bundles over finite fields}\label{sec_ff}

In this section $k = \mathbb{F}_q$ will be a finite field with $q$ elements. Let $X\subset\mathbb{P}^n$ be a smooth complete intersection of $c$ quadric hypersurfaces and $F_{c-1}(X)\subset\mathbb{G}(c-1,n)$ the variety parametrizing $(c-1)$-dimensional linear subspaces contained in $X$, where $\mathbb{G}(c-1,n)$ is the Grassmannian of $(c-1)$-dimensional linear subspaces of $\mathbb{P}^n$ in its Pl\"ucker embedding.

\begin{Remark}\label{rsub}
Thanks to the study of the geometry and of the canonical class of $F_{c-1}(X)$ in \cite{DM98}, and arguing as in the proof of \cite[Theorem 2.1]{HT21} we have that if $n \geq c(c+1)$ then $F_{c-1}(X)$ has a point and $X$ is rational.
\end{Remark}

\begin{Lemma}\label{uni2q}
Let $X\subset\mathbb{P}^n$ be a complete intersection of two quadric hypersurfaces. Assume that $n\geq 4$. If either $X$ is singular but not a cone or $X$ is smooth and contains a line then $X$ is rational. Furthermore, if $X$ is smooth then $X$ is unirational.  
\end{Lemma}
\begin{proof}
Write $X = Q_1\cap Q_2$ with $Q_i\subset\mathbb{P}^n$ quadric hypersurface. Since $n\geq 4$ Remark \ref{C-W} yields that $X$ has a point $p\in X$. First assume $X$ to be smooth and that there is a line $L$ through $p$ intersecting $X$ in at least three points counted with multiplicity. Then $L$ is contained in both $Q_1$ and $Q_2$. So $L\subset X$ and Remark \ref{rsub} yields that $X$ is rational. If all the lines through $p$ intersect $X$ in at most two points counted with multiplicity then the image of the birational projection of $X$ from $p$ is a cubic hypersurface $Y\subset\mathbb{P}^{n-1}$ with no triple points. Since $n\geq 4$ Remark \ref{C-W} implies that $Y$ has a point and hence by \cite[Theorem 1]{Kol02} $Y$ is unirational. 

If $X$ has a double point $p\in X$ the projection of $X$ from $p$ is a quadric $Y$ which again by Remark \ref{C-W} has a smooth point, and if $X$ has a triple point $p\in X$ the projection of $X$ from $p$ is a hyperplane $Y$. In both cases $Y$ is rational.  
\end{proof}

\begin{Proposition}\label{thm1ff}
Let $\mathcal{Q}^{n-1}\subset\mathbb{P}^1\times \mathbb{P}^n$ be a divisor of bidegree $(d,2)$. If $d = 1$ then $\mathcal{Q}^{n-1}$ is rational. Furthermore, in the same notation of Proposition \ref{Ott_G}, if either $d = 2$ or
\begin{itemize}
\item[(i)] $d = 3$ and for some $[\overline{z}_0:\overline{z}_1:\overline{z}_2]\in\mathbb{P}^2$ the complete intersection $X_{(\overline{z}_0,\overline{z}_1,\overline{z}_2)}$ is not a cone; or
\item[(ii)] $d \geq 4$, $n\geq d(d-1)$ and for some $[\overline{z}_0:\dots:\overline{z}_{d-1}]\in\mathbb{P}^{d-1}$ the complete intersection $X_{(\overline{z}_0,\dots,\overline{z}_{d-1})}$ is smooth;
\end{itemize}
then $\mathcal{Q}^{n-1}$ is unirational. 
\end{Proposition}
\begin{proof}
The case $d\in\{1,2\}$ follows from Remark \ref{12-22}. Assume that $d = 3$. Then $X_{(\overline{z}_0,\overline{z}_1,\overline{z}_2)}$ is a complete intersection of two quadrics and the claim follows from Proposition\ref{Ott_G} and Lemma \ref{uni2q}. If $d \geq 4$, $n\geq d(d-1)$ then $X_{(\overline{z}_0,\dots,\overline{z}_{d-1})}$ is a complete intersection of $d-1$ quadrics and the claim follows from Remark \ref{rsub}.
\end{proof}

\begin{Proposition}\label{mainff}
Let $\mathcal{Q}^{n-1}\rightarrow\mathbb{P}^1$ be a quadric bundle with $\delta_{\mathcal{Q}^{n-1}}\leq 4n-1$. Assume that $\rho$ does not vanish at all points of $\mathbb{P}^1$, $\sigma$ does not have a factor depending just on $x_0,x_1$ and $\mathcal{Q}^1_{n-3}$ has a smooth point; and if $d_{n-4} = d_{n-3} = d_{n-2} = d_{n-1} = d_n = 3$ assume in addition, in the notation of Proposition \ref{Ott_G}, that there exists $[\overline{z}_0:\overline{z}_1:\overline{z}_2]\in\mathbb{P}^2$ such that $X_{(\overline{z}_0,\overline{z}_1,\overline{z}_2)}$ is irreducible and not a cone. Then $\mathcal{Q}^{n-1}$ is unirational.   
\end{Proposition}
\begin{proof}
For the first part it is enough to argue as in the proof of Theorem \ref{main1}. Assume that $d_{n-2} = d_{n-1} = d_n = 3$. Then $n\geq 3$ and $d_{n-4}=d_{n-3} = d_{n-2} = d_{n-1} = d_n = 3$. So $\mathcal{Q}^3_{n-5}\subset\mathbb{P}^1\times\mathbb{P}^4$ is a divisor of bidegree $(3,2)$ and to conclude it is enough to apply Proposition \ref{thm1ff}. 
\end{proof}

\begin{Remark}\label{ptsff}
Over a finite field $k = \mathbb{F}_q$ the conic bundle $S$ in Remark \ref{smoothpt} is unirational. Indeed, the fiber $\{y_0^2+y_1^2+y_2^2 = 0\}$ over $[x_0,x_1] = [1:1]$ is smooth and hence has $q+1$ points which by the description of $\Sing(S)$ in Remark \ref{smoothpt} are smooth points of $S$.
\end{Remark}

\bibliographystyle{amsalpha}
\bibliography{Biblio}

\end{document}